\newcommand{\old}[1]{}
\theoremstyle{plain}
\newtheorem{thm}{Theorem}[section]
\newtheorem{lem}[thm]{Lemma}
\newtheorem{cor}[thm]{Corollary}
\newtheorem{prop}[thm]{Proposition}
\theoremstyle{definition}
\newtheorem{defn}[thm]{Definition}
\newtheorem{ex}[thm]{Example}
\Crefname{thm}{Theorem}{Theorems}
\Crefname{lem}{Lemma}{Lemmas}
\theoremstyle{remark}
\newtheorem{remark}[thm]{Remark}
\renewcommand{\SS}{\mathbb{S}}
\DeclareMathOperator{\conv}{conv}
\DeclareMathOperator{\psj}{psj}
\DeclareMathOperator{\sink}{sink}
\DeclareMathOperator{\srce}{source}
\DeclareMathOperator{\im}{im}
\def\reals{{\mathbb R}}
\def\integers{{\mathbb Z}}
\tikzstyle{BlueLine}=[line width=0.3mm,color=blue,text=black]
\tikzstyle{BluePoly}=[BlueLine,fill=blue!20]
\tikzstyle{RedLine}=[line width=0.3mm,color=red,text=black]
\tikzstyle{RedPoly}=[RedLine,fill=red!20]
\tikzstyle{GreenLine}=[thick,color=black!30!green,text=black]
\title{Poset topology, moves, and Bruhat interval polytope lattices}
\author{Christian Gaetz}
\address[Gaetz]{Department of Mathematics, University of California, Berkeley, CA 94720}
\email{gaetz@berkeley.edu}
\author{Patricia Hersh}
\address[Hersh]{Department of Mathematics, University of Oregon, Eugene, OR 97403}
\email{plhersh@uoregon.edu}
\thanks{P.H. was supported by NSF grant DMS-1953931.  This research was also supported by NSF-RTG grant DMS-2039316. C.G. was partially supported by NSF grant DMS-2452032 and by a
travel grant from the Simons Foundation.}
\begin{document}

\begin{abstract}
We study the poset topology of lattices arising from orientations of 1-skeleta of \emph{directionally simple} polytopes, with Bruhat interval polytopes $Q_{e,w}$ as our main example. We show that the order complex $\Delta((u,v)_w)$ of an interval therein is homotopy equivalent to a sphere if $Q_{u,v}$ is a face of $Q_{e,w}$ and is otherwise contractible. This significantly generalizes the known case of the permutahedron. We also show that saturated chains from $u$ to $v$ in such lattices are connected, and in fact highly connected, under flipping across 2-faces. When $w$ is a Grassmannian permutation, this implies a strengthening of the restriction of Postnikov's move-equivalence theorem to the class of BCFW bridge decomposable plabic graphs.
\end{abstract}

\keywords{Bruhat interval polytope, 1-skeleton poset, BCFW bridge decomposition, poset topology, directionally simple, flip graph}
\subjclass[2020]{05E45, 06A07} 

\maketitle

\section{Introduction}\label{sec:intro}

\emph{Bruhat interval polytopes} were introduced in \cite{KW} and further studied in \cite{TW}, motivated by their connections to the Toda lattice and to the moment map on the totally positive part of the flag variety. Recently, these polytopes have attracted significant attention as the moment polytopes of generic torus orbit closures in Schubert varieties \cite{lee-masuda, lee-masuda-park-song}, as distinguished flag positroid polytopes \cite{boretsky-eur-williams}, and for their connection to Mirkovi\'{c}--Vilonen polytopes \cite{sanchez}.

It is natural to study the \emph{1-skeleton poset} $P_w$ of a Bruhat interval polytope $Q_w=Q_{e,w}$, obtained by orienting the 1-skeleton of $Q_w$ with respect to a certain cost vector $\boldsymbol{\rho}$. This is a partial order on the lower Bruhat interval $[e,w]$ which is intermediate in strength between the weak order and the strong (Bruhat) order. The order complexes of intervals in both the weak and strong orders are known \cite{bjorner-bruhat, bj, BW-bruhat-order} to be homotopy equivalent to balls or spheres; indeed these are fundamental examples in poset topology. These results have also found many applications to the study of the weak and strong orders, and in particular they allow for the easy determination of their M\"{o}bius functions, which have long (see e.g. \cite{verma}) been of interest. In our first main result, we show that these results extend to intervals in the posets $P_w$. 

We write $[u,v]_w$ for the closed interval between $u$ and $v$ in the poset $P_w$ and $A_w(u,v)$ for the set of atoms of this interval; we write $(u,v)_w$ for the open interval. 

\begin{thm}
\label{thm:intro-topology}
The order complex $\Delta((u,v)_w)$ of any open interval in $P_w$ is homotopy equivalent 
to a sphere $\SS^{|A_w(u,v)|-2}$ if $Q_{u,v}$ is a face of $Q_w$, and is otherwise contractible. Thus the M\"{o}bius function takes values $\mu_{P_w}(u,v) = (-1)^{|A_w(u,v)|}$ or $\mu_{P_w}(u,v)=0$ accordingly.
\end{thm}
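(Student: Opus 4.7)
The plan is to apply Bj\"{o}rner's crosscut theorem to the set $A = A_w(u,v)$ of atoms of the interval $[u,v]_w$. This reduces the computation of the homotopy type of $\Delta((u,v)_w)$ to that of the nerve complex $\mathcal{N}(A)$, whose faces are the subsets $S \subseteq A$ admitting a common upper bound in the \emph{open} interval $(u,v)_w$. Everything then comes down to understanding, in terms of the face structure of $Q_w$, when such an upper bound strictly below $v$ exists.

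The first step is to use the directionally simple structure of $Q_w$ to associate to each $S \subseteq A$ a face $F_S$ of $Q_w$ whose sink with respect to $\boldsymbol{\rho}$ is $u$ and whose edges at $u$ correspond exactly to $S$; let $v_S$ denote the source of this face. I expect a structural lemma, proved earlier in the paper, to identify $v_S$ with the join $\bigvee S$ in $P_w$ and to guarantee $v_S \in [u,v]_w$ whenever $S \subseteq A$ (since each atom $a \in S$ already lies in $[u,v]_w$ and $v_S$ should be the minimal such upper bound). In particular $v_S$ is a candidate upper bound for $S$ inside $[u,v]_w$, and it lies in $(u,v)_w$ exactly when $v_S \ne v$.

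With this machinery in hand, the case dichotomy proceeds as follows. If $Q_{u,v}$ is a face of $Q_w$, then $F_A = Q_{u,v}$ and $v_A = v$, so $A$ has no common upper bound strictly below $v$. On the other hand, every proper subset $S \subsetneq A$ spans a proper subface $F_S \subsetneq Q_{u,v}$ with $v_S <_w v$, so $S$ contributes a face of $\mathcal{N}(A)$. Thus $\mathcal{N}(A)$ is the boundary of a simplex on $A$ and $\Delta((u,v)_w) \simeq \SS^{|A|-2}$. If instead $Q_{u,v}$ is not a face of $Q_w$, one argues that $F_A$ is a proper subface of $\conv([u,v]_w)$ and so $v_A <_w v$; then $v_A \in (u,v)_w$ is a common upper bound for all atoms, $\mathcal{N}(A)$ is a full simplex, and $\Delta((u,v)_w)$ is contractible. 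The M\"{o}bius function statement then follows immediately from Philip Hall's formula.

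The main obstacle is the correspondence between atom subsets and faces: proving that each $S \subseteq A$ spans a unique face $F_S$ of $Q_w$ whose source $v_S$ equals the join $\bigvee S$ in $P_w$ and lies in $[u,v]_w$, and---most subtly---that the equality $v_A = v$ characterizes exactly when $Q_{u,v}$ is a face of $Q_w$. These structural statements about how directional simplicity of $Q_w$ interacts with intervals in $P_w$ should be the technical core of the proof; once they are available, the crosscut theorem delivers both the spherical and contractible conclusions uniformly.
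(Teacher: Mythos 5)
Your overall route is essentially the paper's. Both arguments reduce the homotopy type of $\Delta((u,v)_w)$ to the combinatorial question of which subsets $S\subseteq A_w(u,v)$ satisfy $\bigvee S <_w v$: you do this via Bj\"{o}rner's crosscut theorem applied to the atom crosscut, while the paper uses the order-preserving surjection $x\mapsto\{a\in A_w(u,v)\mid a\leq_w x\}$ onto a Boolean lattice with top and/or bottom removed together with the dual Quillen fiber lemma (\Cref{prop:surjective-poset-map}, \Cref{cor:dual-quillen}); since $P_w$ is a lattice these two reductions yield the same dichotomy, so this difference is cosmetic. The structural inputs you defer are indeed the paper's technical core: the identification of the vertex of $F_S$ opposite $u$ with $\bigvee S$ is \Cref{thm:join-equals-pseudo} (via \Cref{cor:Bruhat-join-equals-pseudo}), and the equivalence ``$\bigvee A_w(u,v)=v$ if and only if $Q_{u,v}$ is a face'' is \Cref{cor:face-description}. (A minor slip: with the orientation induced by $\boldsymbol{\rho}$, the vertex $u$ is the \emph{source} of $F_S$ and $v_S=\psj(S)$ is its \emph{sink}, not the reverse.)

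There is, however, one genuine gap. In the sphere case you assert that every proper subset $S\subsetneq A_w(u,v)$ has $v_S<_w v$, justified only by the observation that $F_S$ is a proper subface of $F_{A_w(u,v)}=Q_{u,v}$. That inference is a non sequitur: for a directionally simple polytope whose 1-skeleton poset is a lattice, nothing formal prevents a proper face through $u$ from having the same sink as a larger face, i.e.\ a proper subset of the atoms could still join to $v$. If that happened, your nerve complex would be a proper subcomplex of the boundary of the simplex on $A_w(u,v)$ and the conclusion $\SS^{|A_w(u,v)|-2}$ would fail. The paper rules this out in \Cref{prop:distinct-joins} (distinct subsets of atoms have distinct joins), whose proof genuinely uses the face non-revisiting property of $Q_w$ (\Cref{prop:nonrevisit-property}), which in turn rests on Tsukerman--Williams (\Cref{thm:faces-are-Bruhat-interval-polytopes}); the remark following the proof of \Cref{thm:intro-topology} makes clear that non-revisiting is an additional hypothesis, not a consequence of directional simplicity plus the lattice property. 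This same geometric input also underlies the direction of \Cref{cor:face-description} you flagged as subtle, so your plan needs the non-revisiting property added explicitly alongside the two structural lemmas you name.
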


Since $i$-faces of $Q_w$ correspond to $i$-dimensional torus orbits in a generic torus orbit closure $Y_w$ in the Schubert variety $X_w$, \Cref{thm:intro-topology} gives a new combinatorial way of identifying the higher-dimensional orbits given only the $0$- and $1$-dimensional orbits. \Cref{thm:intro-topology} also answers a question\footnote{Personal communication at FPSAC 2023.} of Richard Stanley, who asked for a description of the M\"{o}bius function of $P_w$. 

Our approach to \Cref{thm:intro-topology} extends the techniques developed by the second named author in \cite{He}, but requires new arguments. It was proven there that a \emph{simple} polytope whose 1-skeleton poset is the Hasse diagram of a lattice satisfies the conclusions of \Cref{thm:intro-topology}. The first named author proved in \cite{Ga} that the posets $P_w$ are indeed lattices. The polytopes $Q_w$ are not in general simple, but they do have the weaker property of being \emph{directionally simple} \cite{Ga}. We give a new argument to show in \Cref{thm:join-equals-pseudo} that these conditions suffice to imply that the lattice-theoretic join agrees with a convex-geometric \emph{pseudojoin} operation. We use this together with a face non-revisiting property of $Q_w$ to prove \Cref{thm:intro-topology}. 

\begin{remark}
Our proof of \Cref{thm:intro-topology} could be adapted without significant changes to apply to any directionally simple polytope $Q$ with facial orientation $\mathcal{O}$ such that $\mathcal{O}$ is the Hasse diagram of a lattice and such that $Q$ satisfies the face non-revisiting property (see \Cref{prop:nonrevisit-property} for this notion of face non-revisiting  and see Section ~\ref{sec:prelim} for the notions of directional simplicity and of facial orientation). It would be interesting to find other examples of such polytopes.  
\end{remark}

Our second main result concerns maximal chains in $[u,v]_w$ (that is, facets of $\Delta((u,v)_w)$) or more generally in any lattice which is the 1-skeleton poset of a directionally simple polytope. If a maximal chain $\gamma_1$ enters some 2-dimensional face $F$ at its unique source (with respect to our fixed 1-skeleton orientation), continues along part of $\partial F$, and then 
exits at the unique sink of $F$, we can perform a \emph{flip} to produce a new maximal chain $\gamma_2$ which goes the other way around $\partial F$. Two maximal chains are \emph{flip-equivalent} if they can be connected by a sequence of such flips. We study the graph $\mathcal{M}(u,v)$ whose vertices are these maximal chains and whose edges correspond to such flips.

\begin{thm}
\label{thm:intro-moves}
Let $\mathcal{O}$ be a facial orientation (see \Cref{def:facial-orientation}) and suppose that a polytope $Q$ is $\mathcal{O}$-directionally simple such that $\mathcal{O}$ is the Hasse diagram of a lattice $L$. Suppose that $[u,v]_L$ has $a$ atoms. Then $\mathcal{M}(u,v)$ is connected, and if $a \geq 2$ is moreover $(a-1)$-connected.
\end{thm}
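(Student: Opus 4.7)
The plan is to proceed by induction on the length $\ell(u,v)$, with the case $\ell(u,v)=1$ being trivial. Let $a_1,\ldots,a_a$ denote the atoms of $[u,v]_L$ and, for each $i$, let $\mathcal{M}_i \subseteq \mathcal{M}(u,v)$ be the induced subgraph on those maximal chains whose first edge is $u \lessdot a_i$. A move based on a 2-face not containing $u$ preserves the first edge of a chain, whereas a move based on a 2-face containing $u$ necessarily changes it. Consequently $\mathcal{M}_i$ is naturally isomorphic to $\mathcal{M}(a_i,v)$, and by the inductive hypothesis applied to $[a_i,v]_L$ each $\mathcal{M}_i$ carries the connectivity promised by the theorem for that subinterval.

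The bridges between the pieces $\mathcal{M}_i$ arise from 2-faces of $Q$ with unique source $u$. By $\mathcal{O}$-directional simplicity and \Cref{thm:join-equals-pseudo}, each pair of atoms $a_i,a_j$ determines a unique such 2-face $F_{ij}$, whose edges out of $u$ are $u \to a_i$ and $u \to a_j$ and whose unique sink is the lattice join $a_i \vee_L a_j$. Call a chain \emph{$F_{ij}$-aligned} if its initial segment traverses the entire $a_i$-side of $\partial F_{ij}$ from $u$ to $a_i \vee_L a_j$; such chains exist, obtained by concatenating this directed path with any maximal chain in $[a_i \vee_L a_j, v]_L$. The move across $F_{ij}$ then converts an $F_{ij}$-aligned chain in $\mathcal{M}_i$ into an $F_{ji}$-aligned chain in $\mathcal{M}_j$. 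For plain connectivity, the inductive connectedness of $\mathcal{M}_i$ guarantees that any chain in $\mathcal{M}_i$ is move-equivalent to an $F_{ij}$-aligned one, and the $F_{ij}$-move then lands in $\mathcal{M}_j$.

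For the stronger $(a-1)$-connectivity when $a \ge 2$, the key new observation is that $[a_i,v]_L$ always contains at least $a-1$ atoms. Indeed, for each $k \ne i$ the second vertex $b_i^{(k)}$ along the $a_i$-side of $\partial F_{ik}$ covers $a_i$ in $L$ and lies below $v$; the $b_i^{(k)}$ are pairwise distinct as $k$ varies because two distinct 2-faces of $Q$ cannot share two edges. Thus inductively each $\mathcal{M}_i$ is at least $(a-2)$-vertex-connected. Given any deletion of $a-2$ vertices from $\mathcal{M}(u,v)$, at least two pieces $\mathcal{M}_i, \mathcal{M}_j$ retain a surviving chain, and each surviving piece remains internally connected because the deletion costs it at most $a-2$ vertices. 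The remaining task is to route between two surviving pieces through some surviving $F_{ij}$-crossing.

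The main obstacle I foresee is precisely this last step: the adversary's $a-2$ deletions might destroy every surviving $F_{ij}$-aligned chain in a given $\mathcal{M}_i$, while the inductive $(a-2)$-connectivity only produces \emph{some} surviving chain in $\mathcal{M}_i$, not a surviving $F_{ij}$-aligned one. I expect to resolve this by exploiting the $a-1$ available crossing targets $\{F_{ij} : j \ne i\}$ simultaneously: either a direct surviving $F_{ij}$-alignment can be found, or one routes via a third piece $\mathcal{M}_k$ using surviving $F_{ik}$ and $F_{kj}$ crossings. A cleaner and more unified proof may instead construct a higher-dimensional simplicial complex whose $1$-skeleton is $\mathcal{M}(u,v)$ and whose topological $(a-2)$-connectedness follows from $\mathcal{O}$-directional simplicity together with \Cref{thm:intro-topology}, then invoke a standard argument to transfer topological connectivity to graph-theoretic vertex connectivity.
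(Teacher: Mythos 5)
Your reduction handles plain connectivity adequately (the decomposition by first edge, the crossing faces $F_{ij}$ with sink $a_i\vee_L a_j$ via \Cref{thm:join-equals-pseudo}, and induction on the length of the interval all check out, including your observation that $[a_i,v]_L$ has at least $a-1$ atoms). But the higher connectivity --- the actual content of the theorem when $a\geq 2$ --- is not proved, and the gap sits exactly where you flag it. Two concrete problems: first, the assertion that ``each surviving piece remains internally connected because the deletion costs it at most $a-2$ vertices'' is false as stated: $(a-2)$-vertex-connectivity only guarantees survival of connectivity after deleting at most $a-3$ vertices, so an adversary who spends all $a-2$ deletions inside a single piece $\mathcal{M}_i$ may shatter it into several components, some of which contain no $F_{ij}$-aligned chain for any $j$. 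Second, your two proposed repairs do not close this: routing ``via a third piece $\mathcal{M}_k$'' still presupposes that the stranded component can reach \emph{some} crossing chain inside the damaged piece, which is the very point at issue; and there is no standard implication from topological $(a-2)$-connectedness of an ambient complex to $(a-1)$-vertex-connectivity of its $1$-skeleton, nor have you exhibited a complex whose $1$-skeleton is $\mathcal{M}(u,v)$ and whose topology you control (note \Cref{thm:intro-topology} concerns $\Delta((u,v)_w)$, whose vertices are elements of the interval, not maximal chains).

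For comparison, the paper does not induct on the interval at all; it adapts the Athanasiadis--Edelman--Reiner argument. \Cref{lem:path-exists} builds, for any two chains $\gamma_1,\gamma_2$, an explicit move-path $\gamma_1*\gamma_2$ all of whose intermediate chains use only the initial atoms of $\gamma_1$ or $\gamma_2$, and \Cref{lem:unique-sequence,lem:what-determines,lem:vertex-disjoint} show that the paths from a fixed chain $\gamma$ to targets with pairwise distinct initial atoms are vertex-disjoint except at $\gamma$ (the delicate point in the directionally simple setting, since one cannot simply project to a vertex figure as for simple polytopes). Given $|\Gamma|\leq a-2$ deleted chains, a counting argument then produces, for any surviving chain whose initial atom is ``contaminated,'' enough pairwise disjoint escape paths into the untouched part $M$ that at least one avoids $\Gamma$ entirely. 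Some quantitative mechanism of this kind --- many disjoint routes out of a damaged region, not just the existence of one crossing per pair of atoms --- is what your outline is missing.
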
   

\Cref{thm:intro-moves} is surprisingly powerful. For example, when $Q=Q_{w_0}$ is the \emph{permutahedron} the lattice is the (right) weak order on the symmetric group. Maximal chains in weak order correspond to \emph{reduced words} of $w_0$, and a flip across a 2-face $F$ corresponds to a \emph{commutation move} or \emph{braid move} according to whether $F$ is a square or hexagon. Thus in this case \Cref{thm:intro-moves} recovers Matsumoto's Theorem \cite{matsumoto, tits} that all reduced words are connected under commutation or braid moves.

\Cref{thm:intro-moves} extends a similar result of Athanasiadis, Edelman, and Reiner who work in the setting of simple polytopes (and with the assumption that $u=\hat{0}, v=\hat{1}$) \cite{AER}. Our generalization to directionally simple polytopes is important because $Q_w$ is almost never simple for $w$ a Grassmannian permutation, and flip-equivalence on these Bruhat interval polytopes is of particular interest: if $w$ is Grassmannian, then, by a result of Williams \cite{wi}, $Q_w$ is isomorphic to a \emph{bridge polytope} and maximal chains in $P_w$ correspond to \emph{BCFW bridge decompositions}.
 
BCFW bridge decompositions are physically motivated ways of building up reduced \emph{plabic graphs} with trip permutation $w$ \cite{arkani-hamed-book}. Flips across a 2-face now correspond to applications of Postnikov's moves \cite{postnikov} to the resulting plabic graphs (see \cite[Thm.~5.3]{wi}). A fundamental theorem of Postnikov \cite{postnikov} says that any two reduced plabic graphs with the same trip permutation are move-equivalent. We obtain the restriction of this result to the class of BCFW decomposable plabic graphs as an easy corollary of \Cref{thm:intro-moves}. Our result has the advantage of guaranteeing that two BCFW decomposable plabic graphs can be connected by moves, with all intermediate graphs also BCFW decomposable\footnote{We thank Melissa Sherman-Bennett for pointing out that there exist reduced plabic graphs which are not BCFW decomposable.}. Furthermore, we are not aware of prior results on the higher connectivity of the plabic moves.

\begin{cor}
\label{cor:BCFW}
Let $w \in S_n$ be Grassmannian. Then any two BCFW bridge decomposable plabic graphs with trip permutation $w$ can be connected by a sequence of moves, without leaving the class of BCFW decomposable plabic graphs. Moreover, the move graph is $(a-1)$-connected, where $a$ is the number of distinct simple reflections appearing in a reduced word for $w$.
\end{cor}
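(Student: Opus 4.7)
The plan is to deduce this corollary as a direct application of Theorem \ref{thm:intro-moves} to the Bruhat interval polytope $Q = Q_w$ with $u = e$ and $v = w$, followed by a translation into the plabic-graph language via Williams' isomorphism.

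First I would verify the hypotheses of Theorem \ref{thm:intro-moves}. By \cite{Ga}, the polytope $Q_w$ is directionally simple with respect to the facial orientation $\mathcal{O}$ induced by $\boldsymbol{\rho}$, and $\mathcal{O}$ is precisely the Hasse diagram of the lattice $P_w$. Applying the theorem then yields that $\mathcal{M}(e,w)$ is connected, and $(a-1)$-connected once $a \geq 2$, where $a = |A_w(e,w)|$ is the number of atoms of $P_w$.

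Next I would transfer this conclusion into the plabic-graph setting. Since $w$ is Grassmannian, Williams' isomorphism \cite{wi} identifies $Q_w$ with the corresponding bridge polytope, under which maximal chains of $P_w$ correspond bijectively to BCFW bridge decompositions of reduced plabic graphs with trip permutation $w$, and by \cite[Thm.~5.3]{wi} moves across 2-faces correspond to applications of Postnikov's moves on the associated plabic graphs. Because every vertex of $\mathcal{M}(e,w)$ comes from a maximal chain in $P_w$, the corresponding plabic graph is by construction BCFW decomposable, so any path of moves connecting two such vertices stays inside the BCFW decomposable class, giving the first sentence of the corollary; the higher connectivity statement translates directly.

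The final step is to identify $a$ with the number of distinct simple reflections appearing in a reduced word for $w$. The atoms of $[e,w]_w$ are the vertices of $Q_w$ joined to $e$ by an edge. By the edge characterization for Bruhat interval polytopes \cite{TW}, an edge from $e$ to a vertex $u$ must have $u = t_{i,j}$ a transposition with $t_{i,j} \leq_B w$ and with the open Bruhat interval $(e, t_{i,j})$ empty; these conditions force $j = i+1$, so $u = s_i$, and conversely each simple reflection $s_i \leq_B w$ yields such an edge. Since the set of such $s_i$ is exactly the support of any reduced word for $w$, we obtain the claimed value of $a$. This bookkeeping identification of the atoms is the only step that is not a formal translation and is where I expect the main (quite mild) obstacle; the rest of the argument is a direct unpacking of Theorem \ref{thm:intro-moves} through Williams' isomorphism.
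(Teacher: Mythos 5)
Your proposal is correct and takes essentially the same route as the paper: the corollary is read off by applying \Cref{thm:intro-moves} to $Q_w$ (whose hypotheses hold by \Cref{thm:bip-is-directionally-simple}, \Cref{thm:bip-is-lattice}, and the Hasse diagram property for $\boldsymbol{\rho}$) and translating through Williams' identification of maximal chains in $P_w$ with BCFW bridge decompositions and of 2-face moves with plabic moves. The only point the paper leaves implicit is the identification $|A_w(e,w)| = \#\{\text{simple reflections } s_i \preceq w\}$; this is true, but for the converse direction (that every such $s_i$ actually spans an edge of $Q_w$ at $\mathbf{e}$) the emptiness of $(e,s_i)_B$ alone is not enough, and it is cleaner to note that every edge at the source $\mathbf{e}$ ends at some $s_i$ with $\ell(s_i)=1$ while $\mathbf{e}$ is incident to at least $\dim Q_w$ edges and $\dim Q_w$ equals the size of the support of $w$.
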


It would be reasonable to hope for \Cref{thm:intro-moves} to hold  for more general polytopes, in light of Balinski's theorem. After all, when all of the  monotone paths in a polytope are coherent  then results of \cite{BKS} imply that  the graph $\mathcal{M}(u,v)$  is  the graph of a $(d-1)$-polytope, specifically it is   the graph of a monotone path polytope (recall e.g. from \cite{Zi}  that Balinski's Theorem states that the graph of any $(d-1)$-polytope is $(d-1)$-connected). This line of reasoning does work, for example, for the  zonotopes of \cite{EJLM} having the all-coherence property and the polymatroid polytopes which were proven in  \cite{BS} to have only coherent monotone paths. However, it is shown in \cite{AER} that the higher connectivity of \Cref{thm:intro-moves} fails for general polytopes.  It is an interesting question for future study to determine exactly how much further \Cref{thm:intro-moves} can be pushed.  

\section{Preliminaries}\label{sec:prelim}

\subsection{Bruhat interval polytopes}

We refer the reader to \cite{BB} for background on Coxeter groups and Bruhat order. We view the symmetric group $S_n$ as a Coxeter group, with adjacent transpositions as the simple reflections. We write $\preceq$ for the Bruhat order on $S_n$, and $\leq_R$ for the right weak order.

\begin{defn}[Kodama--Williams \cite{KW}]
For a permutation $w \in S_n$, denote by $\mathbf{w}$ the vector $(w^{-1}(1),\ldots, w^{-1}(n)) \in \mathbb{R}^n$. For $w' \preceq w \in S_n$, define the corresponding \emph{Bruhat interval polytope} as the convex hull $Q_{w',w}\coloneqq \conv \{ \mathbf{u} \mid w' \preceq u \preceq w\}$. If $w'=e$ is the identity permutation, we write $Q_w$ for $Q_{e,w}$. See \Cref{fig:Bruhat-interval-polytope-picture} for an example.
\end{defn}

\begin{defn}
Let $P_w$ denote the partially ordered set $([e,w], \leq_w)$ with cover relations $u \lessdot_w v$ whenever $\ell(v)=\ell(u)+1$ (where $\ell$ denotes Coxeter length) and there is an edge of $Q_w$ connecting $\mathbf{u}$ and $\mathbf{v}$. We often conflate elements $u$ of $P_w$ and vertices $\mathbf{u}$ of $Q_w$ when no confusion can result.
\end{defn}

 \begin{figure} 
\begin{tikzpicture}[scale=0.6]

\draw (7,1)--(2,4)--(0,7)--(1,10)--(7,13); 
\draw (7,1)--(7,4)--(5,7)--(7,10)--(7,13);
\draw(5,7)--(1,10);
\draw(7,4)--(8,7)--(7,10);
\draw(8,7)--(12,10)--(7,13);
\draw(7,1)--(13,4)--(14,7)--(12,10); 
\draw [dashed] (2,4)--(10,7)--(9,10)--(7,13);
\draw [dashed] (0,7)--(9,10);
\draw [dashed] (13,4)--(10,7);
\draw [dashed] (14,7)--(9,10);

\node [below] at (7,1){\scriptsize $1234$}; 
\node [below left] at (2,4){\scriptsize $2134$}; 
\node [below left] at (7,4){\scriptsize $1324$}; 
\node [below right] at (13,4){\scriptsize $1243$}; 
\node [below left] at (0,7){\scriptsize $2314$}; 
\node [below left] at (5,7){\scriptsize $3124$}; 
\node [left] at (8,7){\scriptsize $1342$}; 
\node [above right] at (10,6.8){\scriptsize $2143$}; 
\node [below right] at (14,7){\scriptsize $1423$}; 
\node [above left] at (1,10){\scriptsize $3214$}; 
\node [above left] at (7,10){\scriptsize $3142$}; 
\node [above right] at (9,10){\scriptsize $2413$}; 
\node [above right] at (12,10){\scriptsize $1432$}; 
\node [above] at (7,13){\scriptsize $3412$};

\draw [fill, ] (7,1) circle [radius=0.05]; 
\draw [fill] (2,4) circle [radius=0.05]; 
\draw [fill] (7,4) circle [radius=0.05]; 
\draw [fill] (13,4) circle [radius=0.05]; 
\draw [fill] (0,7) circle [radius=0.05]; 
\draw [fill] (5,7) circle [radius=0.05]; 
\draw [fill] (8,7) circle [radius=0.05]; 
\draw [fill] (10,7) circle [radius=0.05];
\draw [fill] (14,7) circle [radius=0.05];
\draw [fill, ] (1,10) circle [radius=0.05]; 
\draw [fill] (7,10) circle [radius=0.05]; 
\draw [fill] (9,10) circle [radius=0.05]; 
\draw [fill] (12,10) circle [radius=0.05]; 
\draw [fill] (7,13) circle [radius=0.05]; 

\end{tikzpicture}
\caption{The Bruhat interval polytope $Q_{3412}$.}
\label{fig:Bruhat-interval-polytope-picture}
\end{figure}

A key property of Bruhat interval polytopes is the following.   

\begin{thm}[Tsukerman-Williams]\label{thm:faces-are-Bruhat-interval-polytopes}
Every face of a Bruhat interval polytyope is a Bruhat interval polytope.
\end{thm}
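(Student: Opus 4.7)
The strategy is to identify a given face $F$ of $Q_{u,v}$ with the set of maximizers of some linear functional $c$, and then describe the corresponding set of permutations as a Bruhat interval. Writing $F = \conv\{\mathbf{w} : w \in [u,v], \, c \cdot \mathbf{w} = \max_{w' \in [u,v]} c \cdot \mathbf{w}'\}$, the plan is to first analyze the face of the full permutahedron $Q_{w_0}$ that maximizes $c$: it is classical that this face is itself a Bruhat interval polytope $Q_{u_c, v_c}$, where the ordered set partition $(B_1, \ldots, B_k)$ of $[n]$ coming from the level sets of $c$ (sorted by decreasing $c$-value) determines $u_c$ (sorting each block in increasing order) and $v_c$ (sorting each block in decreasing order). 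Therefore $F = \conv\{\mathbf{w} : w \in [u,v] \cap [u_c, v_c]\}$, and it remains to prove that this intersection is a Bruhat interval.

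The crux is to show $[u,v] \cap [u_c, v_c] = [u', v']$ in Bruhat order for some $u', v'$. Here I would exploit that $[u_c, v_c]$ is a \emph{parabolic interval}: it equals the set of permutations $w$ such that, for each block $B_i$, the multiset $\{w(j) : j \in B_i\}$ equals the fixed set $\{u_c(j) : j \in B_i\}$---equivalently, $[u_c, v_c]$ is a single coset of a standard parabolic (Young) subgroup $W_J$. I would then combine Deodhar's parabolic factorization $w = w^J \cdot w_J$ with the sub-word property of Bruhat order to show that the ``block-sort'' projection $\pi$ that sorts the values within each block in increasing order is Bruhat-monotone. Applying $\pi$ to any fixed element of the intersection produces the candidate Bruhat-minimum $u'$, and dually the maximum $v'$ is obtained by a block-reverse sort.

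The final verification is that $[u', v'] \subseteq [u, v] \cap [u_c, v_c]$. Containment in $[u, v]$ is automatic from $u \preceq u'$, $v' \preceq v$, and transitivity. Containment in $[u_c, v_c]$ follows from the sub-word property together with the observation that the parabolic condition---having a fixed value set on each block---is preserved when one passes to a sub-word of a reduced expression. The main obstacle I anticipate is making the projection argument rigorous: one has to show that $\pi$ restricts to a well-defined Bruhat-monotone map on $[u,v]$ whenever its image lies in $[u_c, v_c]$, and that it genuinely produces the minimum of the intersection. If this turns out to be delicate, an alternative is to bypass $\pi$ entirely and argue directly via the lifting property, showing that any two Bruhat-minimal elements of the intersection must coincide, and dually for maxima.
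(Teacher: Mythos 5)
This theorem is not proved in the paper at all: it is quoted from Tsukerman--Williams, so the only comparison available is with their argument, which works directly with the set of maximizers of a linear functional \emph{inside} $[u,v]$ and shows it is a Bruhat interval via a generalized lifting property of the symmetric group. Your proposal instead reduces to intersecting $[u,v]$ with a face of the full permutahedron, and this reduction is where the proof breaks. The face $F$ of $Q_{u,v}$ exposed by $c$ is the set of maximizers of $c$ over $Q_{u,v}$, and these need not maximize $c$ over $Q_{w_0}$: the maximum of $c$ on the subpolytope is in general strictly smaller than on the permutahedron, so $F \neq \conv\{\mathbf{w} : w \in [u,v]\cap[u_c,v_c]\}$. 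Concretely, take $n=3$, $u=e=123$, $v=213$, $c=(1,0,0)$, so $c\cdot\mathbf{w}=w^{-1}(1)$. The vertices of $Q_{u,v}$ are $(1,2,3)$ and $(2,1,3)$, so $F$ is the single vertex $\mathbf{213}$; but the face of the permutahedron maximizing $c$ has vertex set $\{231,321\}$ (the permutations with $w(3)=1$), which is disjoint from $[e,213]$, so your proposed intersection is empty. This is not repairable by choosing a better $c$: since the normal fan of $Q_{u,v}$ only coarsens the braid fan, faces of $Q_{u,v}$ are generally not of the form $Q_{u,v}\cap G$ with $G$ a face of $Q_{w_0}$, so the problem genuinely is to control the $c$-maximizers within the interval $[u,v]$, which do not form a parabolic coset.

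Two smaller points. First, your justification that $[u',v']\subseteq[u_c,v_c]$ via ``the parabolic condition is preserved under passing to a subword'' is not correct as stated: deleting letters from a reduced word of an element of a coset $xW_J$ typically leaves the coset; the correct statement (that anything Bruhat-between the block-increasing and block-decreasing sorts has the same block value sets) is proved with the tableau criterion, not the subword property. Second, your closing fallback --- using lifting to show the minimal elements of the relevant set coincide --- is the right instinct and is essentially how Tsukerman--Williams proceed, but applied to the maximizer set rather than to $[u,v]\cap[u_c,v_c]$, and the ordinary lifting property (which requires a common descent $s$) does not suffice; they establish a \emph{generalized} lifting property, special to $S_n$, and that is the real content of the proof. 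As written, the proposal has a fatal gap at its first ``therefore.''
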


\Cref{thm:faces-are-Bruhat-interval-polytopes} implies in particular that every cover relation in $P_w$ is a cover relation in Bruhat order. This implies in turn that Bruhat interval polytopes are \emph{generalized permutahedra} in the sense of Postnikov \cite{Postnikov-beyond}.   

The poset $P_w$ can alternatively be described as the 1-skeleton poset of the polytope $Q_w$ with respect to a cost vector. Such posets are the subject of \cite{He}.

\begin{defn}
\label{def:facial-orientation}
We say an acyclic orientation $\mathcal{O}$ of the 1-skeleton of a polytope $Q$ is a \emph{facial orientation} if the restriction of $\mathcal{O}$ to each face $F$ of $Q$ has a unique source and a unique sink; we write $\srce(F)$ and $\sink(F)$ for these vertices. The \emph{1-skeleton poset} $P(Q,\mathcal{O})$, a partial order on the vertices of $Q$, is defined as the transitive closure of $\mathcal{O}$. In this paper, we will be most interested in cases where $\mathcal{O}$ is the Hasse diagram of $P(Q,\mathcal{O})$ (that is, when $\mathcal{O}$ is transitively reduced); this is the \emph{Hasse diagram property} of \cite{He}.
\end{defn}

\emph{Generic cost vectors} give an easy source of facial orientations. For $Q \subset \reals^d$, a vector $\mathbf{c}\in \reals^d$ is \emph{generic} if no edge of $Q$ is orthogonal to $\mathbf{c}$. Given such a vector, we obtain a facial orientation $\mathcal{O}_{\mathbf{c}}$ by orienting each edge according to increasing inner product with $\mathbf{c}$; in this case we may write $P(Q,\mathbf{c})$ for $P(Q,\mathcal{O}_{\mathbf{c}})$.

\begin{ex}
Consider the permutahedron $Q_{w_0}$ for $S_n$. It is observed in 
\cite{He} that for cost vector ${\boldsymbol \rho} \coloneqq (n,n-1,\dots ,1)$ the poset $P(Q_{w_0},\boldsymbol{\rho})$ is the right weak order on $S_n$ and that $\mathcal{O}_{\boldsymbol{\rho}}$ has the Hasse diagram property. It is further observed in \cite{Ga} that for any Bruhat interval polytope $Q_w$ with $w \in S_n$ and this same cost vector ${\boldsymbol \rho}$ that $\mathcal{O}_{\boldsymbol{\rho}}$ is the Hasse diagram of the poset $P_w \coloneqq P(Q_w,\boldsymbol{\rho})$.
\end{ex}

Recall that a $d$-polytope $Q$ is {\it simple} if each vertex is incident to exactly $d$ edges, or, equivalently, if, for each vertex $v\in Q$, each subset of the edges incident to $v$ spans a distinct face. Most of the results in \cite{He} were proven for simple polytopes. In this paper, we work in the more general setting of \emph{directionally simple} polytopes. We work with a more general notion of directional simplicity than that introduced in \cite{Ga}, which only allowed for orientations induced by a generic cost vector.

\begin{defn}
We say $Q$ is \emph{$\mathcal{O}$-directionally simple} with respect to a facial orientation $\mathcal{O}$ if for each vertex $v$ of $Q$, each subset $E$ of the edges outgoing from $v$ (with respect to $\mathcal{O}$) spans a distinct face of $Q$. We write $F_E$ for this face.
\end{defn}

Directional simplicity is a weaker condition than simplicity since the existence of $F_E$ is required only for subsets of edges outgoing from $v$, not for all subsets of edges incident to $v$.

\begin{thm}[Theorem 5.3, \cite{Ga}]
\label{thm:bip-is-directionally-simple}
For any $w \in S_n$, the Bruhat interval polytope $Q_w$ is directionally simple with respect to the cost vector $\boldsymbol{\rho}$.
\end{thm}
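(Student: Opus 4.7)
The plan is to show that for each vertex $\mathbf{u}$ of $Q_w$ (with $u \preceq w$) and each subset $E$ of edges of $Q_w$ outgoing from $\mathbf{u}$ under $\mathcal{O}_{\boldsymbol{\rho}}$, there is a unique face $F_E$ of $Q_w$ whose outgoing edges at $\mathbf{u}$ are precisely $E$. Since $\mathcal{O}_{\boldsymbol{\rho}}$ is the Hasse diagram of $P_w$, these outgoing edges correspond to the cover relations $u \lessdot_w v_i$ in $P_w$; let $V_E = \{v_i\}$ denote the associated set of upper covers.

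For existence, take $F_E$ to be the smallest face of $Q_w$ containing $\mathbf{u}$ and each $\mathbf{v}_i \in V_E$. By \Cref{thm:faces-are-Bruhat-interval-polytopes}, this face is itself a Bruhat interval polytope $Q_{u,v}$ for some $v \in [e,w]$, and $\mathbf{u}$ is its unique source under the restriction of $\mathcal{O}_{\boldsymbol{\rho}}$ (because $\mathcal{O}_{\boldsymbol{\rho}}$ is facial and $u$ is the Bruhat-minimum of the interval $[u,v]$). The core claim is then that the outgoing edges at $\mathbf{u}$ inside $F_E$ are exactly those in $E$. I would prove this by induction on $|E|$: the base case $|E|=1$ is immediate, while for the inductive step one considers $F_{E\setminus\{v_j\}} = Q_{u,v'}$ together with the edge $\mathbf{u}\mathbf{v}_j$, takes the smallest face $Q_{u,v''}$ of $Q_w$ containing both, and argues using the subword property of Bruhat order together with the description of $Q_w$ as a generalized permutahedron that no additional outgoing edge at $\mathbf{u}$ is introduced.

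Uniqueness is the easier half: any face of $Q_w$ is determined by its vertex set, and the vertex set of a face with source $\mathbf{u}$ can be recovered from its outgoing edges at $\mathbf{u}$ by iteratively extending oriented paths within the face, since every vertex of the face is reachable from its unique source along such a path.

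The principal obstacle is the inductive existence step described above. In a general polytope, the smallest face containing two lower-dimensional faces meeting at a common source vertex can acquire extraneous outgoing edges at that vertex, and this would defeat the conclusion. Ruling this out in $Q_w$ appears to require combining the submodular description of $Q_w$ as a generalized permutahedron with Coxeter-theoretic input, in particular the lifting and subword properties of Bruhat order on $[e,w]$; this is where the combinatorial argument in \cite{Ga} is most delicate, and I would expect that step to be the technical heart of the proof.
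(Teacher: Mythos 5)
First, note that this paper does not prove \Cref{thm:bip-is-directionally-simple} at all: it is imported verbatim from \cite{Ga} (Theorem 5.3 there) and used as a black box, so there is no in-paper argument to measure your proposal against; it has to stand on its own as a proof of the cited result.

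As a proof it has a genuine gap, and you have in fact identified it yourself. The entire content of directional simplicity is the claim you label as ``the core claim'': that the minimal face $F_E$ of $Q_w$ containing $\mathbf{u}$ and the covers in $V_E$ has \emph{exactly} $E$ as its set of outgoing edges at $\mathbf{u}$ (equivalently, that distinct subsets span distinct faces, with $\dim F_E=|E|$). Everything before that point is routine: existence of a smallest face containing a prescribed set of faces is automatic in any polytope, \Cref{thm:faces-are-Bruhat-interval-polytopes} identifies it as some $Q_{u,v}$, and $\mathbf{u}$ being its source follows since every cover in $P_w$ raises Bruhat order. The inductive step where extraneous outgoing edges must be excluded is precisely where a general polytope fails and where all the Bruhat-interval-specific work lives, and your proposal replaces it with an expectation that ``the subword property of Bruhat order together with the description of $Q_w$ as a generalized permutahedron'' will suffice. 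No mechanism is given for why adjoining the edge $\mathbf{u}\mathbf{v}_j$ to $Q_{u,v'}$ cannot force additional covers of $u$ into the spanned face, so the technical heart of the theorem is asserted rather than proved. Separately, your uniqueness argument is circular as stated: to ``recover the vertex set by iteratively extending oriented paths within the face'' you must already know which outgoing edges at the later vertices belong to the face, which is the very data you are trying to reconstruct. (Uniqueness is in any case not the issue once $F_E$ is taken to be the spanned face; distinctness for distinct $E$ is again equivalent to the unproved core claim.) So the proposal is a plausible outline of a strategy, but the step that makes the theorem true for Bruhat interval polytopes, and which occupies the actual proof in \cite{Ga}, is missing.
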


It will also be very important for us that the posets $P_w$ are lattices.

\begin{thm}[Theorem 4.5, \cite{Ga}]
\label{thm:bip-is-lattice}
For any $w \in S_n$, the poset $P_w$ is a lattice.
\end{thm}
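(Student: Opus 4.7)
The plan is to construct joins directly. Since $P_w$ has unique minimum $e$ (the $\boldsymbol{\rho}$-source of $Q_w$) and unique maximum $w$ (the $\boldsymbol{\rho}$-sink), it suffices to show that every pair $u, v \in P_w$ has a join. To set up a workable induction, I would prove the more general statement that the 1-skeleton poset $P(Q_{a,b}, \mathcal{O}_{\boldsymbol{\rho}})$ of any Bruhat interval polytope $Q_{a,b}$ is a lattice, proceeding by induction on $\dim Q_{a,b}$.

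For $u,v$ in such a poset, let $F(u,v)$ denote the inclusion-minimal face of $Q_{a,b}$ containing both $u$ and $v$; this is well defined since intersections of polytope faces are faces. By Theorem~\ref{thm:faces-are-Bruhat-interval-polytopes}, $F(u,v)$ is itself a Bruhat interval polytope $Q_{a',b'}$, and the restriction of $\mathcal{O}_{\boldsymbol{\rho}}$ is a facial orientation with source $a'$ and sink $b'$. I propose $u \vee v := b'$. That $b'$ is a common upper bound is immediate: within the 1-skeleton of $F(u,v)$, which sits inside the 1-skeleton of $Q_{a,b}$, there are $\boldsymbol{\rho}$-increasing paths from each of $u$ and $v$ up to $b'$.

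The main content is minimality: given any common upper bound $z$ of $u$ and $v$, one must show $b' \leq z$. I would argue that every minimal such $z$ already lies in $F(u,v)$. Given this, the induction hypothesis applied to the strictly smaller-dimensional polytope $F(u,v)$ provides a unique join there, which must be $b'$, thereby identifying the minimum common upper bound in the ambient poset as $b'$.

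The principal obstacle is showing that minimal common upper bounds lie in $F(u,v)$. Because $Q_w$ is not generally simple, the outgoing edges at $u$ and $v$ are not independent, so one cannot locally truncate an ascending path from $u$ to $z$ into portions inside versus outside $F(u,v)$. I would attempt an exchange-style argument: starting from ascending paths from $u$ and from $v$ to $z$, iteratively flip across 2-faces (whose structure is controlled by Tsukerman--Williams) to push these paths inside $F(u,v)$, concluding that $z$ itself must lie in $F(u,v)$ once no further flip is possible and $z$ is minimal. Each flip must preserve the ascending property, which should follow from the fact that cover relations in $P_w$ coincide with Bruhat covers, combined with the subword characterization or strong exchange property of Bruhat order; verifying this and the termination of the flipping process is the delicate technical core of the argument.
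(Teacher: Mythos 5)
This statement is imported by the paper from \cite{Ga} (Theorem 4.5 there) without proof, so there is no in-paper argument to compare against; judged on its own terms, your proposal has a fatal flaw at its very first step. The candidate you propose for the join, namely $u\vee v:=\sink(F(u,v))$ where $F(u,v)$ is the inclusion-minimal face containing $u$ and $v$, is simply not the join in general, already for the permutahedron $Q_{w_0}$ (a \emph{simple} polytope), where $P_{w_0}$ is the right weak order. Take $w_0=4321\in S_4$, $u=e$ and $v=2314$: here $u\vee v=v=2314$, but the vertices $\mathbf{e}=(1,2,3,4)$ and $\mathbf{2314}=(3,1,2,4)$ lie on no common square facet, so $F(u,v)$ is the hexagonal facet $Q_{e,3214}$, whose sink is $3214\neq 2314$. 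If you object that $u\leq v$ is a degenerate case, take the incomparable pair $u=2134$, $v=1342$: their join in right weak order is $3421$, yet $\mathbf{u}=(2,1,3,4)$ and $\mathbf{v}=(1,4,2,3)$ lie on no common facet at all, so $F(u,v)=Q_{w_0}$ and your candidate is $4321\neq 3421$. The identity ``join $=$ sink of the spanned face'' is only valid for a set of atoms of an interval, i.e.\ upper covers of a common element $u$ (this is \cite[Thm.~4.7]{He} and \Cref{thm:join-equals-pseudo}), and even there it is proved \emph{assuming} the lattice property, so invoking it to establish the lattice property would be circular.

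Two further structural problems would remain even if the candidate were right. First, your induction on $\dim F(u,v)$ is vacuous exactly in the generic case: when $u$ and $v$ lie on no common proper face, $F(u,v)$ is the whole polytope, the dimension does not drop, and the claim ``minimal upper bounds lie in $F(u,v)$'' carries no information, so the argument never gets off the ground (the second counterexample above lands precisely in this case). Second, the step you yourself flag as the ``delicate technical core'' --- the 2-face flipping argument pushing ascending paths and minimal upper bounds into $F(u,v)$ --- is not carried out, and as the counterexamples show, no correct version of it can rescue the proposed formula. A workable proof must construct joins (or meets) by a genuinely different mechanism; note that the paper's own development runs in the opposite direction, taking the lattice property of $P_w$ from \cite{Ga} as an input and then deriving facial statements such as \Cref{thm:join-equals-pseudo} and \Cref{cor:face-description} from it.
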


\begin{remark}
The more general polytopes $Q_{w',w}$ do not always have the lattice or directional simplicity properties (see \cite[\S1.3.3]{Ga}), so our results do not necessarily apply to these.
\end{remark}

\subsection{Poset topology}

We briefly review some background on poset topology.

\begin{defn}
The {\it M\"obius function} of a finite bounded partially ordered set $(P,\leq)$ is the function $\mu_P : P\times P \rightarrow \integers $ satisfying 
$$\mu_P(u,v) = -\sum_{u\le z < v} \mu_P(u,z),$$
for $u\ne v$, and $\mu_P(u,u)=1$, for all $u \in P$.
\end{defn}

\begin{defn}
The {\it order complex}, denoted $\Delta (P)$, of a poset $P$ is the abstract simplicial complex whose $i$-faces are the chains $v_0 < \cdots < v_i$ of $P$.  
\end{defn}

We will use the following well known fact due to Philip Hall.

\begin{prop}
\label{prop:philip-hall}
Let $P$ be a finite bounded poset and let $u \leq_P v$. Then
\[
\mu_P(u,v) = \tilde{\chi }(\Delta((u,v)_P)),
\]
where $\tilde{\chi}$ denotes reduced Euler characteristic.
\end{prop}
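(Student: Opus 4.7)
The plan is to prove the two sides agree by expressing each as the same alternating sum over chains in the interval $[u,v]_P$. The classical route passes through the chain-counting formula for the Möbius function, after which the proposition reduces to matching indices with the face numbers of the order complex.

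First, I would establish the \emph{chain-counting formula} for $\mu_P$: for $u \leq v$,
\[
\mu_P(u,v) = \sum_{k \geq 0} (-1)^k a_k(u,v),
\]
where $a_k(u,v)$ denotes the number of strict chains $u = x_0 < x_1 < \cdots < x_k = v$ (with $a_0(u,u)=1$ and $a_0(u,v) = 0$ for $u < v$). The cleanest derivation is via Möbius inversion in the incidence algebra: writing $\zeta = \delta + \eta$ where $\eta(u,v) = 1$ if $u < v$ and $0$ otherwise, one formally inverts $\zeta$ as a geometric series $\mu = \zeta^{-1} = \sum_{k \geq 0} (-1)^k \eta^k$. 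The sum is finite on any interval because $P$ is finite, and evaluating $\eta^k(u,v)$ counts precisely the strict chains $a_k(u,v)$. Alternatively, one can verify this formula directly by induction using the defining recursion for $\mu_P$.

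Next I would unpack $\tilde{\chi}(\Delta((u,v)_P))$. By definition, the $i$-faces of $\Delta((u,v)_P)$ are the chains $z_0 < z_1 < \cdots < z_i$ with each $z_j$ in the open interval $(u,v)_P$. Such a chain has $i+1$ elements strictly between $u$ and $v$, so prepending $u$ and appending $v$ gives a strict chain from $u$ to $v$ with $i+3$ elements, i.e., counted by $a_{i+2}(u,v)$. Hence $f_i = a_{i+2}(u,v)$ for $i \geq 0$, and
\[
\tilde{\chi}(\Delta((u,v)_P)) = -1 + \sum_{i \geq 0} (-1)^i f_i = -1 + \sum_{k \geq 2} (-1)^k a_k(u,v).
\]

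Finally, since $a_1(u,v) = 1$ for $u < v$ (there is a unique two-element chain from $u$ to itself-and-$v$), the chain-counting formula gives
\[
\mu_P(u,v) = \sum_{k \geq 1} (-1)^k a_k(u,v) = -1 + \sum_{k \geq 2} (-1)^k a_k(u,v),
\]
matching the expression for $\tilde{\chi}(\Delta((u,v)_P))$ above. There is no real obstacle here; the only subtle point is the index bookkeeping converting between chains in $[u,v]_P$ and faces in the open interval, and the initial observation that $a_1(u,v) = 1$ accounts for the $-1$ that distinguishes reduced from unreduced Euler characteristic.
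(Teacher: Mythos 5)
Your proof is correct and is the classical argument for Philip Hall's theorem (Möbius function as the alternating chain count via $\mu=\sum_k(-1)^k\eta^k$, matched against the face numbers of the order complex), which is exactly the standard fact the paper invokes without proof. The index bookkeeping and the role of $a_1(u,v)=1$ in producing the $-1$ of the reduced Euler characteristic are handled correctly.
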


Recall from \cite{Qu} the Quillen fiber lemma:  

\begin{lem}[Quillen fiber lemma]
Suppose that $f:P\rightarrow P'$ is an order-preserving map such that for each $p'\in P'$ the subposet $P^{\le p'} \coloneqq \{ p\in P| f(p)\le p' \} $ has contractible order complex. Then $\Delta(P)$ and $\Delta(P')$ are homotopy equivalent.
\end{lem}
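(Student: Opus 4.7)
The plan is to use the classical mapping-cylinder construction. Form the auxiliary poset
$$M \coloneqq \{(p,p') \in P \times P' : f(p) \le p'\},$$
ordered componentwise, and consider the two order-preserving projections $\pi \colon M \to P$, $(p,p') \mapsto p$, and $\pi' \colon M \to P'$, $(p,p') \mapsto p'$. The strategy is to show that each projection induces a homotopy equivalence on order complexes, thereby giving $\Delta(P) \simeq \Delta(M) \simeq \Delta(P')$.

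For $\pi$, I would exhibit an explicit deformation retraction. The map $\iota \colon P \to M$, $p \mapsto (p,f(p))$, is order-preserving and satisfies $\pi \circ \iota = \mathrm{id}_P$. Moreover, for any $(p,p') \in M$ we have $f(p) \le p'$ by the definition of $M$, so $\iota \circ \pi(p,p') = (p,f(p)) \le (p,p')$. Thus $\iota \circ \pi$ is pointwise below $\mathrm{id}_M$. By the standard fact that two order-preserving self-maps that are pointwise comparable induce homotopic maps of order complexes (e.g.\ via the carrier lemma applied to the chain-concatenation homotopy), $\iota \circ \pi \simeq \mathrm{id}_M$, so $\pi$ is a homotopy equivalence.

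For $\pi'$, the key computation is that each down-set preimage is contractible. Given $p' \in P'$, the subposet
$$(\pi')^{-1}(P'^{\le p'}) = \{(q,q') \in M : q' \le p'\}$$
admits the order-preserving self-map $c \colon (q,q') \mapsto (q,p')$, which pointwise dominates the identity and whose image is isomorphic to $P^{\le p'}$. Applying the same pointwise-comparability principle, this preimage deformation retracts onto $P^{\le p'}$, which is contractible by hypothesis.

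The main obstacle is the transition from \emph{contractible down-set preimages of $\pi'$} to \emph{$\pi'$ being a homotopy equivalence}; this is where the real content of Quillen's Theorem~A lies. I would establish this separately by induction on $|P'|$. At the inductive step, pick a maximal element $m \in P'$, set $P'_0 = P' \setminus \{m\}$ and $M_0 = (\pi')^{-1}(P'_0)$, and write $\Delta(M)$ and $\Delta(P')$ as pushouts
$$\Delta(M_0) \cup_{\Delta(M_0 \cap (\pi')^{-1}(P'^{<m}))} \Delta((\pi')^{-1}(P'^{\le m})), \qquad \Delta(P'_0) \cup_{\Delta(P'^{<m})} \Delta(P'^{\le m}).$$
The inductive hypothesis applied to $\pi'$ restricted to $M_0 \to P'_0$ and to $(\pi')^{-1}(P'^{<m}) \to P'^{<m}$, combined with the contractibility established above to handle the pieces involving $m$, lets one invoke the gluing lemma for homotopy equivalences to conclude. (Alternatively, one can organize the entire argument through the nerve lemma applied to the cover of $\Delta(M)$ by the subcomplexes $\Delta((\pi')^{-1}(P'^{\le p'}))$, matching it to the nerve of the analogous cover of $\Delta(P')$ by principal order ideals.)
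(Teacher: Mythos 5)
Your argument is essentially correct, but note that the paper does not prove this lemma at all: it is quoted verbatim from Quillen's paper \cite{Qu}, so there is no in-paper proof to compare against. What you have written is the standard proof of the poset version of Quillen's Theorem~A: pass to the mapping-cylinder poset $M$, retract $M$ onto $P$ via the section $p\mapsto(p,f(p))$ using order homotopy (comparable order-preserving maps induce homotopic maps of order complexes), check that $(\pi')^{-1}(P'^{\le p'})$ closure-retracts onto $P^{\le p'}$, and then prove the Theorem~A statement for $\pi'$ by induction on $|P'|$, splitting off a maximal element $m$, writing both complexes as pushouts along the subcomplexes over $P'^{<m}$, and invoking the gluing lemma for homotopy equivalences (the inclusions are CW subcomplex inclusions, hence cofibrations). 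Two small remarks. First, the mapping cylinder is actually dispensable in this formulation: since the hypothesis is already stated in terms of the fibers $P^{\le p'}=f^{-1}(P'^{\le p'})$, you can run the same induction directly on $f:P\to P'$ — every chain of $P$ meeting $f^{-1}(m)$ lies in $P^{\le m}$, so $\Delta(P)=\Delta(\{p:f(p)\ne m\})\cup_{\Delta(\{p:f(p)<m\})}\Delta(P^{\le m})$, and the three restricted maps satisfy the inductive hypotheses because $m\not\le p'$ for $p'\ne m$; this shortens the argument. Second, your parenthetical nerve-lemma alternative does not work as stated: the pairwise intersections of the subcomplexes $\Delta((\pi')^{-1}(P'^{\le p'}))$ correspond to sets of common lower bounds in $P'$, which need not be contractible (or even connected), so the nerve lemma's hypotheses can fail; one would instead need a homotopy-colimit refinement. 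Since that aside is not load-bearing, the main line of your proof stands, provided you also record the routine checks that the restricted maps $M_0\to P'_0$ and $(\pi')^{-1}(P'^{<m})\to P'^{<m}$ inherit contractible fibers (they do, because $m$ is maximal) and the trivial base case.
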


Since a poset and its dual have the same order complex, an immediate consequence is what we will call the ``dual Quillen fiber lemma'':

\begin{cor}[dual Quillen fiber lemma]
\label{cor:dual-quillen}
Suppose that $f:P\rightarrow Q$ is an order-preserving map such that for each $p'\in P'$ the subposet $P^{\ge p'} \coloneqq \{ p\in P | f(p) \ge p'\}$ has contractible order complex. Then $\Delta (P)$ and $\Delta (P')$ are homotopy equivalent.
\end{cor}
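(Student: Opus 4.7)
The plan is to deduce this corollary directly from the Quillen fiber lemma by passing to opposite posets, exactly as the sentence preceding the statement suggests. The two ingredients I need are (i) that order-preservation is preserved when both source and target are replaced by their duals, and (ii) that a poset and its opposite have literally the same order complex, since a chain $v_0 < v_1 < \cdots < v_i$ in $P$ is the same set of elements as the chain $v_i <_{P^{op}} v_{i-1} <_{P^{op}} \cdots <_{P^{op}} v_0$ in $P^{op}$.

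Concretely, I would first let $P^{op}$ and $(P')^{op}$ denote the opposite posets. The map $f$, viewed set-theoretically as a map $P^{op} \to (P')^{op}$, remains order-preserving: if $a \leq_{P^{op}} b$ then $b \leq_P a$, hence $f(b) \leq_{P'} f(a)$, i.e.\ $f(a) \leq_{(P')^{op}} f(b)$. Next I would identify the Quillen fibers for this dualized map. For $p' \in (P')^{op}$, the fiber condition $f(p) \leq_{(P')^{op}} p'$ is precisely $f(p) \geq_{P'} p'$, so the lower fiber
\[
(P^{op})^{\le p'} = \{p \in P \mid f(p) \leq_{(P')^{op}} p'\}
\]
coincides as a set with $P^{\ge p'}$. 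The posets $(P^{op})^{\le p'}$ and $P^{\ge p'}$ are opposites of one another, so they have identical order complexes, which are contractible by hypothesis.

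With these observations in hand, the Quillen fiber lemma applied to $f: P^{op} \to (P')^{op}$ yields $\Delta(P^{op}) \simeq \Delta((P')^{op})$. Since $\Delta(P^{op}) = \Delta(P)$ and $\Delta((P')^{op}) = \Delta(P')$ as simplicial complexes, this is exactly the desired homotopy equivalence $\Delta(P) \simeq \Delta(P')$.

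There is essentially no obstacle here: the result is a purely formal consequence of the Quillen fiber lemma, and the only point requiring a brief verification is the set-theoretic identification of the fibers, which amounts to unwinding the definition of the opposite order. The statement is really just recorded for convenience, so that later arguments in the paper can use whichever of the two directions is more natural for the map at hand.
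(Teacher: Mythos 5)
Your proposal is correct and follows exactly the paper's intended argument: the corollary is deduced from the Quillen fiber lemma by passing to opposite posets, using that a poset and its dual have the same order complex and that the dualized fibers $(P^{op})^{\le p'}$ coincide with $P^{\ge p'}$. Nothing is missing.
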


\subsection{Topology of simple 1-skeleton lattices}
The following result of \cite{He} helped motivate the present work.   

\begin{thm}[Theorem 5.13, \cite{He}]
\label{thm:simple-lattice-homotopy}
Let $Q$ be a simple polytope with facial orientation $\mathcal{O}$ such that $L \coloneqq P(Q,\mathcal{O})$ is a lattice with Hasse diagram $\mathcal{O}$. Then any open interval $(u,v)_L$ has order complex homotopy equivalent to a sphere $\SS^{a-2}$, where $a$ is the number of atoms in $[u,v]_L$, if $v$ equals the join of the atoms of $[u,v]$, and is otherwise contractible.
\end{thm}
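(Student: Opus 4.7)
The plan is to apply the nerve lemma to the open interval $(u,v)_L$, viewed via its cover by principal order filters $\mathcal{F}_a \coloneqq \{x \in (u,v)_L : x \geq a\}$ indexed by the atoms $a \in A \coloneqq A_L(u,v)$. Every element of $(u,v)_L$ lies above some atom of $[u,v]_L$, so the $\mathcal{F}_a$ cover the open interval. Any chain in $(u,v)_L$ has a minimum element which lies above some atom $a$, so the entire chain lies in $\mathcal{F}_a$; hence $\Delta((u,v)_L) = \bigcup_{a \in A} \Delta(\mathcal{F}_a)$ as simplicial complexes. Each $\mathcal{F}_a$ has minimum element $a$, so $\Delta(\mathcal{F}_a)$ is a cone and hence contractible. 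For $S \subseteq A$, the intersection $\bigcap_{a \in S} \mathcal{F}_a = \{x \in (u,v)_L : x \geq \bigvee S\}$ has minimum $\bigvee S$ if $\bigvee S < v$, and is empty if $\bigvee S = v$. The nerve lemma therefore identifies $\Delta((u,v)_L)$ up to homotopy with the simplicial complex $N$ on vertex set $A$ whose simplices are the subsets $S$ with $\bigvee S < v$.

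The crucial intermediate step is to identify the lattice join with a pseudojoin coming from the polytope: for $S \subseteq A$, let $E_S$ denote the corresponding set of outgoing edges from $u$ in $\mathcal{O}$. By simplicity of $Q$, these edges span a unique face $F_{E_S}$, and I would prove $\bigvee S = \sink(F_{E_S})$. That $\sink(F_{E_S})$ is an upper bound for $S$ follows from the Hasse diagram property, which ensures every vertex of $F_{E_S}$ lies in $[u, \sink(F_{E_S})]_L$; the reverse ``least upper bound'' inequality requires a more careful inductive argument on $|S|$ exploiting simplicity at each step. With the pseudojoin identity in hand, the two cases of the theorem fall out from the structure of $N$. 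If $v = \bigvee A$ (Case 1), then $v = \sink(F_E)$ with $E = E_A$, and for any proper $S \subsetneq A$ the face $F_{E_S}$ is a proper subface of $F_E$ whose sink is strictly below $\sink(F_E) = v$, so $\bigvee S < v$ and $S$ is a simplex of $N$, but $A$ itself is not. Thus $N$ is the boundary of the $(a-1)$-simplex on $A$, homotopy equivalent to $\SS^{a-2}$. If $v \neq \bigvee A$ (Case 2), then $A$ itself is a simplex of $N$, so $N$ is the full $(a-1)$-simplex and hence contractible. The M\"obius function values then follow from \Cref{prop:philip-hall}.

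The main obstacle is the pseudojoin-equals-join identity: this is the point at which all three hypotheses---simplicity of $Q$, the Hasse diagram property, and the lattice property of $L$---genuinely cooperate. In particular, one must show that no common upper bound of $S$ in the ambient lattice $L$ can avoid passing through $\sink(F_{E_S})$, a statement whose proof requires a careful analysis of how $\mathcal{O}$-directed paths in $F_{E_S}$ interact with the lattice structure of $L$. Once this identity is established, the remainder of the argument is a formal consequence of the nerve lemma.
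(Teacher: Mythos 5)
Your overall architecture is the same as the one this paper uses for the generalization \Cref{thm:intro-topology}: your nerve-of-filters argument is equivalent to applying the dual Quillen fiber lemma (\Cref{cor:dual-quillen}) to the map $x \mapsto \{a \in A : a \leq x\}$, and the join-equals-pseudojoin identity you single out is exactly \cite[Thm.~4.7]{He} (generalized here as \Cref{thm:join-equals-pseudo}). However, there is a genuine gap in your Case 1. You claim that for every proper subset $S \subsetneq A$ the face $F_{E_S}$, being a proper subface of $F_{E_A}$, has $\sink(F_{E_S})$ \emph{strictly} below $\sink(F_{E_A}) = v$. Properness of the face does not give this: under a facial orientation a proper face can perfectly well contain the sink of the larger face (e.g.\ any facet of a cube containing its top vertex), so ``proper subface $\Rightarrow$ strictly smaller sink'' is not a valid principle. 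What you actually need is that no proper subset of $A$ has join equal to $v$ --- equivalently, that distinct subsets of atoms have distinct joins. If this failed, your nerve $N$ would be a proper subcomplex of the boundary of the simplex on $A$ and the $\SS^{a-2}$ conclusion would not follow.

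This missing statement is not a formality: in the present paper it is \Cref{prop:distinct-joins}, whose proof requires the face non-revisiting property (\Cref{prop:nonrevisit-property}), which for Bruhat interval polytopes comes from faces being Bruhat intervals and for simple polytopes with the Hasse diagram and lattice hypotheses is itself a nontrivial result of \cite{He}. Concretely, if $S \subsetneq A$ and $\bigvee S = v$, one wants to take $a \in A \setminus S$, note $a \leq v$, and derive a contradiction from a directed path $u \to a \to \cdots \to v$ leaving and re-entering $F_{E_S}$; without a non-revisiting property (or some substitute), pure lattice theory plus simplicity at $u$ does not rule this out. So your proof needs an additional lemma of this type (distinct joins / face non-revisiting) to close Case 1; Case 2 and the rest of the nerve argument are fine as written.
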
 

It is natural to try to generalize this result to directionally simple polytopes.
We will also need the next notion which was introduced in  \cite{He} for simple polytopes but makes equally good sense for directionally simple polytopes.

\begin{defn}
Suppose that a polytope $Q$ is $\mathcal{O}$-directionally simple where $\mathcal{O}$ is the Hasse diagram of $P(Q,\mathcal{O})$. Then the {\it pseudo-join} $\psj(S)$ of a collection $S$ of upper covers of an element $u$ in $P(Q,\mathcal{O})$ is the unique sink of the unique $|S|$-face $F_S$ containing $u$ and the elements of $S$. The face $F_S$ exists and is unique by directional simplicity and has a unique sink since $\mathcal{O}$ is a facial orientation.
\end{defn}

\begin{thm}[Theorem 4.7, \cite{He}]
Let $Q$ be a simple polytope with facial orientation $\mathcal{O}$ such that $\mathcal{O}$ is the Hasse diagram of a lattice $L$.  Then for any collection $S$ of atoms of an interval $[u,v]_L$, we have $\bigvee S = \psj(S)$.  
\end{thm}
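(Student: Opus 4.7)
The plan is to prove $\bigvee S = \psj(S)$ by showing both inequalities. For the easy direction $\bigvee S \leq \psj(S)$: since $\mathcal{O}|_{F_S}$ is a facial orientation on $F_S$ with $\psj(S) = \sink(F_S)$ as its unique sink, each atom $s \in S$ is connected to $\psj(S)$ by a directed path in $F_S$, which lifts through the Hasse diagram property to a chain $s \leq_L \psj(S)$. Hence $\psj(S)$ is an upper bound of $S$ in $L$, and $\bigvee S$ is by definition the least such bound.

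For $\psj(S) \leq \bigvee S$, I would induct on $\dim Q$ with a secondary induction on $|S|$. The heart of the argument is the structural claim that the vertex set of $F_S$, viewed as a subposet of $L$, coincides with the interval $[u,\psj(S)]_L$. The inclusion $F_S \subseteq [u,\psj(S)]_L$ follows from the facial orientation argument above. For the reverse inclusion, which is the main obstacle, suppose $z \in L$ satisfies $u \leq_L z \leq_L \psj(S)$ but $z$ is not a vertex of $F_S$. Choose a saturated chain $u = v_0 \lessdot \cdots \lessdot v_m = z$ in $L$ and let $i$ be maximal with $v_i \in F_S$, so that $(v_i, v_{i+1})$ is an outgoing edge of $v_i$ not lying in $F_S$. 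By simplicity of $Q$, the set of edges of $F_S$ at $v_i$ together with $(v_i, v_{i+1})$ spans a face $F'$ strictly containing $F_S$. The aim is to derive a contradiction by analyzing $\sink(F')$ relative to $\sink(F_S) = \psj(S)$ and leveraging $v_{i+1} \leq_L \psj(S)$; simplicity and the Hasse diagram property must be combined delicately here, since they together force a rigid compatibility between lattice chains in $L$ and directed paths inside faces of $Q$.

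Once the structural claim is granted, $[u,\psj(S)]_L$ equals $F_S$ as a poset, and $F_S$ is itself a simple polytope of dimension $|S|$ inheriting $\mathcal{O}|_{F_S}$ as a facial orientation of Hasse diagram type with 1-skeleton lattice $[u, \psj(S)]_L$. When $|S| < \dim Q$, the inductive hypothesis on dimension applied to $F_S$, with $S$ serving as the full atom set at the source of $F_S$, yields $\bigvee_{F_S} S = \psj(S)$; combined with $\bigvee_L S \leq \psj(S)$, this gives $\bigvee_L S = \psj(S)$, completing the inductive step. The remaining case $F_S = Q$ (so $|S| = \dim Q$) reduces to showing that in such a simple-polytope lattice the join of all atoms of $\hat{0}_L$ equals $\hat{1}_L$; this can be proved by showing that any $z < \hat{1}_L$ fails to dominate some source atom, which follows from outgoing edge analysis at $z$ together with the bijection between subsets of edges at $\hat{0}_L$ and faces at $\hat{0}_L$ guaranteed by simplicity.
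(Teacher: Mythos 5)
Your easy direction ($\bigvee S \leq \psj(S)$) is fine, but the hard direction has two genuine gaps, and they are exactly where the content of the theorem lives. First, your structural claim that the vertex set of $F_S$ equals $[u,\psj(S)]_L$ has as its hard half the inclusion $[u,\psj(S)]_L \subseteq F_S$, which is a face non-revisiting statement: you set up a vertex $v_i$, an escaping edge $(v_i,v_{i+1})$, and a larger face $F'$, and then say ``the aim is to derive a contradiction \dots simplicity and the Hasse diagram property must be combined delicately here.'' No contradiction is actually derived, and none is obvious: a monotone path can in principle leave a face and return to its sink unless non-revisiting is proved. In the present paper the analogous fact (\Cref{prop:nonrevisit-property}) is established only for Bruhat interval polytopes and relies on Tsukerman--Williams (\Cref{thm:faces-are-Bruhat-interval-polytopes}); the paper's proof of the generalization \Cref{thm:join-equals-pseudo} is deliberately organized so as \emph{not} to need this inclusion, proving instead the opposite containment $F_S \subseteq [u, \bigvee S]_L$ (every vertex of the face, in particular its sink, lies weakly below the join), via a double induction in which the vertices of $F_S$ are linearly ordered compatibly with $L$ and each facet is handled according to its source, using that in dimension $\geq 3$ each edge lies on at least two facets.

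Second, even granting the structural claim, your induction on dimension funnels everything into the terminal case $F_S=Q$, i.e.\ the assertion that the join of all atoms above the global source equals the global sink. That is not a degenerate boundary case: it is the full strength of the theorem (in each dimension), and your justification --- ``outgoing edge analysis at $z$ together with the bijection between subsets of edges at $\hat{0}_L$ and faces'' --- is an assertion, not an argument; it is not clear how one produces, for an arbitrary $z <_L \hat{1}_L$, an atom not below $z$ from local data at $z$. So as written the proposal reduces the theorem to two unproved claims, one of which (face non-revisiting) is a nontrivial theorem in its own right and the other of which is essentially the theorem itself. To repair it you would either need to supply a genuine proof of the non-revisiting/interval property for simple polytopes with the lattice and Hasse diagram properties, or abandon that route and argue as in \Cref{thm:join-equals-pseudo}, showing directly that all vertices of $F_S$ (hence $\sink(F_S)=\psj(S)$) lie weakly below $\bigvee S$.
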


\section{Poset topology of Bruhat interval polytopes}

In this section we prove \Cref{thm:intro-topology}, our first main theorem.

Recall that we write $\preceq$ for Bruhat order and $\leq_w$ for the order on $P_w$. We write $(u,v)_B$ and $[u,v]_B$ for open and closed intervals in Bruhat order and $(u,v)_w$ and $[u,v]_w$ for open and closed intervals in $P_w$. We also fix the notation $\mathcal{O}_w$ for the orientation of the 1-skeleton of $Q_w$ induced by the cost vector $\boldsymbol{\rho}$; this is the Hasse diagram of $P_w$. Finally, we write $\wedge_w$ and $\vee_w$ for the meet and join operators in the lattice $P_w$.

Combining \Cref{thm:bip-is-lattice,thm:simple-lattice-homotopy}, it is easy to conclude that \Cref{thm:intro-topology} holds whenever the Bruhat interval polytope $Q_w$ is simple. Our goal is to extend this result to all $Q_w$. 

We first establish a face non-revisiting property for the $Q_w$.

\begin{prop}\label{prop:nonrevisit-property}
Let $v_0 \to v_1 \to \cdots \to v_k$ be a directed path in $\mathcal{O}_w$ and suppose that $v_0$ and $v_k$ both lie in some face $F$ of $Q_w$. Then $v_i$ lies in $F$ for all $i=0,\ldots,k$.
\end{prop}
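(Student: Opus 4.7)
The plan is to leverage \Cref{thm:faces-are-Bruhat-interval-polytopes}, which identifies every face of $Q_w$ with a Bruhat interval polytope, together with the immediate consequence stated after that theorem that every cover relation in $P_w$ is a cover relation in Bruhat order. With these two ingredients in place, the face non-revisiting property should follow essentially by bookkeeping, with no genuine topological or combinatorial obstacle.

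Concretely, I would first apply \Cref{thm:faces-are-Bruhat-interval-polytopes} to write $F = Q_{u,v}$ for some $u \preceq v$. The key geometric observation I will need is that the vertex set of $Q_{u,v}$ is exactly $\{\mathbf{x} \mid u \preceq x \preceq v\}$: this is because every permutation vector in $\reals^n$ is a rearrangement of $(1,\ldots,n)$ and hence has the same Euclidean norm, so all such vectors lie on a common sphere and each is therefore an extreme point of any convex hull containing it. Since $v_0$ and $v_k$ are vertices of $Q_w$ lying in the face $F$, they are vertices of $F$, and hence $u \preceq v_0$ and $v_k \preceq v$ in Bruhat order.

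Next, because $\mathcal{O}_w$ is the Hasse diagram of $P_w$, the directed path $v_0 \to v_1 \to \cdots \to v_k$ is a saturated chain in $P_w$, so each step is a cover in $P_w$ and hence (by the remark following \Cref{thm:faces-are-Bruhat-interval-polytopes}) a cover in Bruhat order. Chaining these covers gives $v_0 \prec v_1 \prec \cdots \prec v_k$ in Bruhat order, so for every intermediate index $i$ we have $u \preceq v_0 \preceq v_i \preceq v_k \preceq v$, i.e., $v_i \in [u,v]_B$. By the vertex description of $Q_{u,v}$ above, this means $\mathbf{v}_i$ is a vertex of $F$, completing the argument. The only step worth spelling out carefully is the sphere argument pinning down the vertices of $Q_{u,v}$; everything else is direct assembly of prior results.
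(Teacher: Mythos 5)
Your proof is correct and follows essentially the same route as the paper: identify $F$ with a Bruhat interval polytope via \Cref{thm:faces-are-Bruhat-interval-polytopes}, use the fact that covers in $P_w$ are Bruhat covers to place every $v_i$ in $[u,v]_B$, and conclude that each $v_i$ is a vertex of $F$. Your extra care in justifying that the vertex set of $Q_{u,v}$ is the full Bruhat interval (via the common-sphere/extreme-point argument) is a detail the paper takes for granted, not a different approach.
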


\begin{proof}
By \Cref{thm:faces-are-Bruhat-interval-polytopes}, the vertices of the face $F$ are the elements $[u,v]_B$ of a Bruhat interval. Note that $[u,v]_w \subset [u,v]_B$ since every cover relation in $P_w$  is a cover relation in Bruhat order. Thus we have $v_i \in [v_0,v_k]_w \subset [u,v]_w \subset [u,v]_B$ for all $i$, so all $v_i$ lie in $F$.
\end{proof}

Next we prove that distinct sets of atoms within closed intervals have distinct joins.  This will allow us to identify a subposet of the interval that is isomorphic to a Boolean lattice.

\begin{prop}\label{prop:distinct-joins}
Let $u \leq_w v$ and let $S,S' \subset A_w(u,v)$ be distinct subsets. Then $\bigvee S \neq \bigvee S'$. 
\end{prop}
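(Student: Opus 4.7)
The plan is to argue by contradiction: assume $\bigvee S = \bigvee S' =: \tilde{w}$ for distinct subsets $S, S'$ of $A_w(u,v)$. After possibly swapping the roles of $S$ and $S'$, I may choose an atom $a \in S' \setminus S$. The strategy is to derive a contradiction by showing that $a$ both must and cannot lie in the face $F_S$, where $F_S$ is the unique $|S|$-dimensional face of $Q_w$ containing $u$ and the elements of $S$ --- equivalently, the face spanned at $u$ by the set of outgoing edges $E_S \coloneqq \{u \to s : s \in S\}$, as provided by \Cref{thm:bip-is-directionally-simple}.

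First I would observe that the sink $\psj(S) = \sink(F_S)$ is an upper bound for $S$ in $P_w$, because each $s \in S$ admits a directed path within $F_S$ to the unique sink. Thus $\tilde{w} = \bigvee S \leq_w \psj(S)$. Since $a \in S'$ implies $a \leq_w \tilde{w}$, concatenating the cover edge $u \to a$ with a directed $\mathcal{O}_w$-path from $a$ to $\tilde{w}$ and one from $\tilde{w}$ to $\psj(S)$ yields a directed path
\[
u \to a \to \cdots \to \tilde{w} \to \cdots \to \psj(S),
\]
whose endpoints $u$ and $\psj(S)$ both lie in $F_S$. By the face non-revisiting property (\Cref{prop:nonrevisit-property}), every intermediate vertex lies in $F_S$; in particular $a \in F_S$.

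On the other hand, I would argue $a \notin F_S$ via a dimension count. If $a$ were in $F_S$, then since $u, a \in F_S$ and $u \to a$ is an edge of $Q_w$, this edge would lie in $F_S$ (any edge of a polytope between two vertices of a face is contained in that face). Hence the outgoing edges at $u$ inside $F_S$ would contain $E_S \cup \{u \to a\}$, a set of $|S|+1$ edges. By \Cref{thm:bip-is-directionally-simple} this set spans a face $F_{S \cup \{a\}}$ of dimension $|S|+1$; and since $F_{S \cup \{a\}}$ is the minimal face of $Q_w$ containing $u$ and these edges, $F_{S \cup \{a\}} \subseteq F_S$. This contradicts $\dim F_S = |S| < |S| + 1$.

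The essential new ingredient is \Cref{prop:nonrevisit-property}, which lets us pull $a$ into $F_S$ from the equality of joins; once this is in hand, the dimension-count half is routine. I do not anticipate significant technical obstacles beyond ensuring the directed paths chain together correctly, which is automatic from $\leq_w$-comparability.
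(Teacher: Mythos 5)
Your proof is correct and takes essentially the same approach as the paper: both arguments force an atom lying in one subset but not the other into a face via the non-revisiting property (\Cref{prop:nonrevisit-property}) applied to a directed path through the common join, and then rule this out using directional simplicity (\Cref{thm:bip-is-directionally-simple}). The only difference is cosmetic: where the paper passes through the identity $F_S \cap F_{S'} = F_{S\cap S'}$ and the observation that the upper covers of $u$ in $F_T$ are exactly $T$, you work directly with $F_S$ and a dimension count, which amounts to the same use of directional simplicity.
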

\begin{proof}
Since $A_w(u,v) \subset A_w(u,w)$, it suffices to prove the claim when $v=w$, so suppose we are in this case. Suppose for the sake of contradiction that $\bigvee S = \bigvee S' = x$ for $S$ and  $S'$ distinct subsets of $A_w(u,v)$.  For any $T \subset A_w(u,v)$, let $F_T$ denote the unique smallest face of $Q_w$ containing $u$ and all elements of $T$. Notice that, by \Cref{prop:nonrevisit-property}, $x \in F_S \cap F_{S'}$, since $\sink(F_S)$ and $\sink(F_{S'})$ are upper bounds in $P_w$ for $S$ and $S'$, respectively. Suppose without loss of generality that $S' \not \subset S$ and consider some $y \in S' \setminus S$.  Then $y\not\in F_S \cap F_{S'}$ by directional simplicity for $Q_w$ (\Cref{thm:bip-is-directionally-simple}).  But by our choice of $y$ and $x$, there is a directed path from $u$ to $y$ to $x$, contradicting \Cref{prop:nonrevisit-property}.
\end{proof}

\begin{remark}
The proof of Proposition ~\ref{prop:distinct-joins} holds more generally for any directionally simple polytope whose directed 1-skeleton is the Hasse diagram of a lattice and satisfies the face non-revisiting property.  
\end{remark}

\begin{prop}\label{prop:surjective-poset-map}
For any $u \leq_w v$ there is a surjective order-preserving map $f:[u,v]_w \rightarrow \mathcal{P}(A_w(u,v))$, where $\mathcal{P}$ denotes the power set (partially ordered by inclusion).
\end{prop}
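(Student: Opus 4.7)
The plan is to take the obvious candidate: define $f \colon [u,v]_w \to \mathcal{P}(A_w(u,v))$ by
\[
f(x) := \{a \in A_w(u,v) : a \leq_w x\}.
\]
This is well-defined by construction (each $f(x)$ is, literally, a subset of $A_w(u,v)$, and $f(u)=\emptyset$ since every atom strictly exceeds $u$), and order-preservation is immediate from transitivity of $\leq_w$: if $x \leq_w y$ and $a \leq_w x$ then $a \leq_w y$, so $f(x) \subseteq f(y)$.

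For surjectivity, given any $S \subseteq A_w(u,v)$, I would produce the explicit preimage $x_S := \bigvee S$, with the convention that the empty join equals the bottom $u$ of the interval. Each element of $S$ satisfies $a \leq_w v$, so $v$ is an upper bound for $S$ in the lattice $P_w$, forcing $x_S \in [u,v]_w$. The containment $S \subseteq f(x_S)$ is immediate from the definition of join. For the reverse containment, I would argue by contradiction: if some atom $a \in A_w(u,v)\setminus S$ satisfied $a \leq_w x_S$, then $x_S$ would already be an upper bound for $S \cup \{a\}$, giving
\[
\bigvee (S \cup \{a\}) \leq_w x_S = \bigvee S \leq_w \bigvee(S \cup \{a\}),
\]
hence $\bigvee(S\cup\{a\}) = \bigvee S$. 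Since $S \neq S \cup \{a\}$, this contradicts \Cref{prop:distinct-joins}.

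The main obstacle has already been handled upstream: \Cref{prop:distinct-joins}, whose proof relies on the face non-revisiting property \Cref{prop:nonrevisit-property} together with directional simplicity (\Cref{thm:bip-is-directionally-simple}), is precisely the statement that the assignment $S \mapsto \bigvee S$ is injective on $\mathcal{P}(A_w(u,v))$. That is the only non-formal ingredient the present surjectivity argument needs; everything else is bookkeeping.
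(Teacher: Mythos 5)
Your proposal is correct and matches the paper's proof: same map $f(x)=\{a\in A_w(u,v): a\leq_w x\}$, with surjectivity established via $f(\bigvee S)=S$ as a consequence of \Cref{prop:distinct-joins}. You simply spell out in more detail the step the paper leaves implicit (that an extra atom below $\bigvee S$ would force $\bigvee(S\cup\{a\})=\bigvee S$, contradicting \Cref{prop:distinct-joins}), which is a fine elaboration rather than a different route.
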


\begin{proof}
For each  $v'\in [u,v]_w$, define $f(v')=\{a \in A_w(u,v) \mid a \leq_w v'\}$. The map $f$ is clearly order-preserving and is surjective since $f(\bigvee S)=S$ by \Cref{prop:distinct-joins}.  
\end{proof}

We now prove a general theorem about directionally simple polytopes whose 1-skeleton poset is the Hasse diagram of a lattice, namely that joins equal pseudojoins. This generalizes \cite[Thm.~4.7]{He} which was proven for the case of simple polytopes. 

\begin{thm}\label{thm:join-equals-pseudo}
Let $Q$ be $\mathcal{O}$-directionally simple such that $\mathcal{O}$ is the Hasse diagram of a lattice $L = P(Q,\mathcal{O})$. Let $u \in L$ and suppose $u \lessdot a_1,\ldots,a_r$. Then $a_1 \vee \cdots \vee a_r = \psj(a_1,\ldots,a_r)$.
\end{thm}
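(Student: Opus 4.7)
The inequality $\bigvee S \leq_L \psj(S)$ is routine: each $a_i$ lies in $F_S$ and so admits a directed path in $F_S$ to $\sink(F_S) = \psj(S)$, giving $a_i \leq_L \psj(S)$, whence $\bigvee S \leq_L \psj(S)$.

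The harder direction is $\psj(S) \leq_L \bigvee S$. I would proceed by induction on $r = |S|$. The base case $r=1$ is immediate, as $\psj(\{a_1\}) = a_1 = \bigvee \{a_1\}$. For the inductive step, set $S' = \{a_1, \ldots, a_{r-1}\}$; by the inductive hypothesis, $x' := \bigvee S' = \psj(S') = \sink(F_{S'})$. Since $F_{S'}$ is a face of $F_S$ (it is spanned by a subset of the outgoing edges at $u$ inside $F_S$, so by directional simplicity at $u$ it is a sub-face of $F_S$), we have $x' \in F_S$, and so $x' \vee a_r \leq_L \sink(F_S) =: y$. Thus it remains to prove $y \leq_L x' \vee a_r$.

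To establish this, my plan is to prove the stronger claim that \emph{every} vertex $v \in F_S$ satisfies $v \leq_L \bigvee S$; specializing to $v = y$ then yields the desired inequality. I would argue this by a secondary induction on the length of a shortest directed path from $u$ to $v$ inside $F_S$. The base case $v = u$ is trivial. For the inductive step, suppose $v \in F_S$ with $v \leq_L \bigvee S$ and let $v \to v'$ be an edge inside $F_S$. I would then show $v' \leq_L \bigvee S$ by producing an index $i$ with $v' \leq_L v \vee a_i$; combined with $v \leq_L \bigvee S$ and $a_i \in S$, this gives $v' \leq_L \bigvee S \vee a_i = \bigvee S$.

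The main obstacle is precisely this last sub-step: associating to each edge $v \to v'$ of $F_S$ a compatible atom $a_i \in S$ for which $v' \leq_L v \vee a_i$. In the simple polytope case treated in \cite[Thm.~4.7]{He}, $F_S$ has a combinatorial cube-like structure (each vertex corresponds to a subset of $S$ via the join map) and this correspondence is transparent. In the directionally simple but non-simple setting one must work harder: a likely route is to invoke directional simplicity at the vertex $v$ itself (not merely at $u$) to locate a $2$-face of $F_S$ through $v$ that contains $v \to v'$ alongside a second outgoing edge at $v$ aimed ``parallel'' to some $u \to a_i$, and then extract the required join inequality from the Hasse diagram property of $\mathcal{O}$ together with the lattice axioms applied on this $2$-face. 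A technical subtlety, not present in the Bruhat case handled by \Cref{prop:distinct-joins}, is that we do not have a face non-revisiting property at our disposal here, so this edge-to-atom correspondence must be constructed purely from directional simplicity and the lattice structure.
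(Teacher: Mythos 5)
Your target statement---that every vertex of $F_S$ lies weakly below $\bigvee S$---is exactly what the paper proves, but your argument has a genuine gap at its central step, and you concede as much: the sub-step attaching to each edge $v \to v'$ of $F_S$ an atom $a_i \in S$ with $v' \leq_L v \vee a_i$ is never constructed, only sketched as ``a likely route,'' and it is not clear such an edge-to-atom correspondence exists at all in the directionally simple setting (there is no meaningful notion of an outgoing edge at $v$ being ``parallel'' to $u \to a_i$ once the cube-like structure of the simple case is gone). Even if you locate a $2$-face at $v$ containing $v \to v'$ together with a second outgoing edge $v \to v''$, the $r=2$ case only tells you that the sink of that $2$-face is $v' \vee v''$, which bounds nothing by $\bigvee S$ unless you already know $v'' \leq_L \bigvee S$---and organizing that information is precisely the bookkeeping your secondary induction omits. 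Note also that your induction starts at the trivial case $r=1$, so even the nontrivial case $r=2$ would have to come out of this missing step; the paper instead takes $r=2$ as its base case, citing \cite[Thm.~4.6]{He} and observing that the proof there carries over to directionally simple polytopes without change.

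The paper closes the argument by a different mechanism that needs no edge-to-atom correspondence: a double induction, on $r$ and on a linear extension ordering $u=u_0,u_1,\ldots,u_k$ of the vertices of $F_{a_1,\ldots,a_r}$, propagating the bound facet by facet rather than edge by edge. For a facet $G$ of $F_{a_1,\ldots,a_r}$ with $\srce(G)=u_i$ and $i>0$, directional simplicity at $u_i$ identifies $G$ as the face spanned by the $r-1$ outgoing edges of $u_i$ in $G$, say to $b_1,\ldots,b_{r-1}$; since $\dim F_{a_1,\ldots,a_r}=r\geq 3$, each edge $\overline{u_i b_j}$ lies in a second facet $G'$ whose source strictly precedes $u_i$ in the ordering, so the induction on the vertex ordering gives $b_1,\ldots,b_{r-1} \leq_L a_1\vee\cdots\vee a_r$, and then the induction on $r$ \emph{applied at the interior vertex} $u_i$ yields $\sink(G)=\psj(b_1,\ldots,b_{r-1})=b_1\vee\cdots\vee b_{r-1}\leq_L a_1\vee\cdots\vee a_r$, hence all of $G$ lies below the join (the case $i=0$ being handled directly since then $G=F_{a_1,\ldots,\widehat{a_j},\ldots,a_r}$). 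This reuse of the inductive hypothesis on $r$ at vertices other than $u$, with their own upper covers, is the idea your proposal is missing, and it is what supplies the join inequalities that your edge-propagation step cannot produce on its own.
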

\begin{proof}
First observe that $a_1\vee \cdots \vee a_r \le \psj(a_1,\dots ,a_r)$ by virtue of $\psj(a_1,\dots ,a_r)$ being an upper bound for $a_1,\dots ,a_r$.  Therefore it suffices to show that $\psj(a_1,\dots ,a_r) \le a_1 \vee \cdots \vee a_r$.

We proceed by induction on $r$, using the base case $r=2$. This case was proven as \cite[Thm.~4.6]{He} (see \cite[Thm.~5.13]{He} for discussion of the case for general facial orientations $\mathcal{O}$) in the setting of simple polytopes, but we note that the proof applies equally well to directionally simple polytopes without modification. So assume now that $r \geq 3$; in this case we need a different proof than in \cite{He}.

Let $U$ denote the set of vertices of the face $F_{a_1,\ldots,a_r}$. We will show that all elements of $U$, including $\psj(a_1,\ldots,a_r)$, lie below $a_1 \vee \cdots \vee a_r$. Let $u=u_0, u_1,\ldots, u_k$ be an ordering of $U$ such that if $u_i < u_j$ then $i<j$. Each element of $U$ lies in some facet of $F_{a_1,\ldots,a_r}$. We will prove by induction on the index $i$ of the vertex $u_i=\srce(G)$ that all vertices of each facet $G$ lie weakly below $a_1 \vee \ldots \vee a_r$. If $i=0$, so $\srce(G)=u_0=u$, then $G=F_{a_1,\ldots,\widehat{a_j},\ldots,a_r}$ for some $j$. By the inductive hypothesis on $r$, we have 
\[
\sink(G)=\psj(a_1,\ldots,\widehat{a_j},\ldots,a_r)=a_1 \vee \cdots \vee \widehat{a_j} \vee \cdots \vee a_r \leq a_1 \vee \cdots \vee a_r.
\]
Thus all vertices of $G$ lie weakly below $a_1 \vee \cdots \vee a_r$.

Now suppose $\srce(G)=u_i$ with $i>0$. Let $b_1,\ldots,b_{r-1}$ be the upper covers of $u_i$, so $G=F_{b_1,\ldots,b_{r-1}}$. Since $\dim(F_{a_1,\ldots,a_r})=r \geq 3$, the edge $\overline{u_ib_1}$ also lies in some other facet $G'\neq G$ of $F_{a_1,\ldots,a_r}$. Let $u_j=\srce(G')$. Note that $u_j \neq u_i$, since otherwise $u_i$ would have at least $r$ upper covers, an impossibility since $u_i \neq u$ and since $Q$ is directionally simple. Thus we have $u_j < u_i$ and, by our choice of ordering, $j<i$. We conclude by induction on $i$ that all vertices of $G'$, and in particular $b_1$, lie weakly below $a_1 \vee \cdots \vee a_r$. The same is true of $b_2,\ldots,b_{r-1}$, and hence $\psj(b_1,\ldots,b_{r-1})= b_1 \vee \cdots \vee b_{r-1} \leq a_1 \vee \cdots \vee a_r$ by the inductive assumption on $r$. We conclude that all vertices of $G$ lie weakly below $a_1 \vee \cdots \vee a_r$, and this completes the proof. 
\end{proof}

\begin{remark}
The proof of \Cref{thm:join-equals-pseudo} refers readers to \cite[Thm.~4.6 and Thm.~5.13]{He} for the $r=2$ base case because it would be a rather lengthy digression to include this argument.  However, one can give the following much shorter proof for $r=2$ if one also assumes the face non-revisiting property. There exists a directed path from $u$ to $a_1\vee a_2$ to  $\psj(a_1,a_2)$ since $\psj(a_1,a_2)$ is an upper bound for $a_1$ and $a_2$.  The non-revisiting property then implies $a_1\vee a_2 \in F_{a_1,a_2}$.  But 
$\psj(a_1,a_2)$ is the only upper bound for $a_1$ and $a_2$ within $F_{a_1,a_2}$, so  $a_1\vee a_2 = \psj(a_1,a_2)$.
\end{remark}

Applying \Cref{thm:bip-is-lattice,thm:bip-is-directionally-simple,thm:join-equals-pseudo}, we have the following corollary.

\begin{cor}\label{cor:Bruhat-join-equals-pseudo}
For any $u \lessdot_w a_1,\ldots,a_r$ we have $\psj(a_1,\ldots,a_r)=a_1 \vee_w \cdots \vee_w a_r$.
\end{cor}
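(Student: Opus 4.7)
The plan is to observe that Corollary \ref{cor:Bruhat-join-equals-pseudo} follows by verifying that the triple $(Q_w, \mathcal{O}_w, P_w)$ satisfies all hypotheses of Theorem \ref{thm:join-equals-pseudo}, and then invoking that theorem directly. Hence essentially no new argument is needed; I just need to check the hypotheses line up.

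First I would recall from the example in Section \ref{sec:prelim} that the orientation $\mathcal{O}_w$ arises from the generic cost vector $\boldsymbol{\rho} = (n, n-1, \ldots, 1)$, and hence is a facial orientation of the $1$-skeleton of $Q_w$, with $P_w = P(Q_w, \mathcal{O}_w)$ as its associated $1$-skeleton poset. That same example notes that $\mathcal{O}_w$ is in fact the Hasse diagram of $P_w$, so the Hasse diagram property required in Theorem \ref{thm:join-equals-pseudo} is satisfied.

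Next, Theorem \ref{thm:bip-is-directionally-simple} gives that $Q_w$ is $\mathcal{O}_{\boldsymbol{\rho}}$-directionally simple, and Theorem \ref{thm:bip-is-lattice} gives that $P_w$ is a lattice. With these two properties in hand, Theorem \ref{thm:join-equals-pseudo} applies to $L = P_w$: for any element $u \in P_w$ with upper covers $u \lessdot_w a_1, \ldots, a_r$, the pseudojoin $\psj(a_1, \ldots, a_r)$, which is defined as the unique sink of the unique face $F_{\{a_1,\ldots,a_r\}}$ of $Q_w$ containing $u$ and the $a_i$, coincides with the lattice join $a_1 \vee_w \cdots \vee_w a_r$.

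Since this is a pure hypothesis-check, there is no serious obstacle: the real work was already done in proving Theorem \ref{thm:join-equals-pseudo} (which required the new inductive argument on facets of $F_{a_1,\ldots,a_r}$ that handled the non-simple case) together with the structural results of \cite{Ga} cited as Theorems \ref{thm:bip-is-directionally-simple} and \ref{thm:bip-is-lattice}. The only subtlety worth flagging is that Theorem \ref{thm:join-equals-pseudo} is stated for upper covers of an element $u$ (not for an arbitrary antichain of atoms of some interval $[u,v]_w$), which is exactly the form in which the corollary is phrased, so no extra reduction is required.
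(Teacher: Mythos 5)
Your proposal is correct and matches the paper's argument, which likewise obtains the corollary by applying \Cref{thm:bip-is-lattice}, \Cref{thm:bip-is-directionally-simple}, and \Cref{thm:join-equals-pseudo} to $(Q_w,\mathcal{O}_w,P_w)$. The hypothesis check you spell out (facial orientation from $\boldsymbol{\rho}$, Hasse diagram property, directional simplicity, lattice property) is exactly what the paper implicitly relies on.
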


With a little further work, we also derive the following corollary.

\begin{cor}\label{cor:face-description} 
The Bruhat interval polytope $Q_{u,v}$ is a face of $Q_w$ if and only if $v=\bigvee A_w(u,v)$.
\end{cor}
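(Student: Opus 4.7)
The plan is to prove both directions by identifying $Q_{u,v}$ (when it is a face of $Q_w$) with the face $F_{A_w(u,v)}$ produced by directional simplicity, at which point the claim reduces to \Cref{cor:Bruhat-join-equals-pseudo}.

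For the only-if direction, I will assume $Q_{u,v}$ is a face of $Q_w$ and first identify its source and sink with $u$ and $v$: by \Cref{thm:faces-are-Bruhat-interval-polytopes} the vertex set of the face is $[u,v]_B$, so any source $s$ of the induced facial orientation satisfies both $s\leq_w u$ (hence $s\preceq u$) and $u\preceq s$ (as $s\in[u,v]_B$), whence $s=u$, and symmetrically the sink is $v$. Then \Cref{prop:nonrevisit-property} shows that every vertex of the face lies in $[u,v]_w$, so the outgoing edges from $u$ inside $Q_{u,v}$ go precisely to the atoms $A_w(u,v)$. Directional simplicity, inherited by the face, forces $Q_{u,v}=F_{A_w(u,v)}$, and taking sinks yields
\[
v=\sink(F_{A_w(u,v)})=\psj(A_w(u,v))=\bigvee A_w(u,v)
\]
by \Cref{cor:Bruhat-join-equals-pseudo}.

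For the if-direction, starting from $v=\bigvee A_w(u,v)$, \Cref{cor:Bruhat-join-equals-pseudo} rewrites this as $v=\psj(A_w(u,v))=\sink(F_{A_w(u,v)})$, so $F_{A_w(u,v)}$ is a face of $Q_w$ with source $u$ and sink $v$. By \Cref{thm:faces-are-Bruhat-interval-polytopes} this face equals $Q_{u',v'}$ for some $u'\preceq v'$, and the same source-versus-Bruhat-endpoint argument as in the first direction pins down $u'=u$ and $v'=v$, so $Q_{u,v}$ is a face of $Q_w$.

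The only nontrivial point is the identification of the source and sink of a face $Q_{u',v'}\subset Q_w$ with its Bruhat endpoints, which powers both directions and uses that $\leq_w$ refines Bruhat order; beyond this the proof is a direct assembly of \Cref{cor:Bruhat-join-equals-pseudo}, \Cref{thm:faces-are-Bruhat-interval-polytopes}, and \Cref{prop:nonrevisit-property}.
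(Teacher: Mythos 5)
Your argument is correct and is essentially the paper's: both directions come down to identifying $Q_{u,v}$ with the face $F_{A_w(u,v)}$ and then invoking \Cref{cor:Bruhat-join-equals-pseudo} together with \Cref{thm:faces-are-Bruhat-interval-polytopes}, the only organizational difference being that the paper proves the ``only if'' direction by contrapositive (if $v >_w \bigvee A_w(u,v)$ then $v\notin F_{A_w(u,v)}$, so $Q_{u,v}$ would properly contain another face of the same dimension $|A_w(u,v)|$), whereas you argue it directly via the source/sink identification. One attribution slip worth fixing: the inclusion ``every vertex of the face lies in $[u,v]_w$'' is not what \Cref{prop:nonrevisit-property} gives (that proposition yields the reverse inclusion, that $[u,v]_w$ is contained in the face); what you actually need is the facial-orientation fact that every vertex of a face lies on a directed path within the face from its unique source to its unique sink, which is also the fact implicitly behind your source/sink identification and behind your unjustified assertion in the ``if'' direction that $u$ is the source of $F_{A_w(u,v)}$ --- the paper relies on the same points just as silently, so this is a citation issue rather than a genuine gap.
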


\begin{proof}
If $v=\bigvee A_w(u,v)$, then by \Cref{cor:Bruhat-join-equals-pseudo} we have $v=\psj(A_w(u,v))$. Thus the vertex set of $F_{A_w(u,v)}$ is exactly $[u,v]_w$ by \Cref{prop:nonrevisit-property}. By \Cref{thm:faces-are-Bruhat-interval-polytopes}, we must have $F_{A_w(u,v)}=Q_{u,v}$, so $Q_{u,v}$ is a face.

Otherwise we have $v>_w\bigvee A_w(u,v)=\psj(A_w(u,v))$, so $v$ does not lie in $F_{A_w(u,v)}$. Then $Q_{u,v}$ cannot be a face of $Q_w$, since $F_{A_w(u,v)} \subsetneq Q_{u,v}$ would both be $|A_w(u,v)|$-dimensional faces.
\end{proof}

Recall that a \emph{cone point} in a simplicial complex is a vertex contained in all of its facets.  Any simplicial complex with a cone point is contractible, because the complex can be contracted to the cone point. If a poset $P$ has a unique minimal element $\hat{0}$, then $\Delta(P)$ has a cone point, since all maximal chains in $P$ contain $\hat{0}$, so $\Delta(P)$ is contractible.

We are now ready to prove \Cref{thm:intro-topology}.

\begin{proof}[Proof of \Cref{thm:intro-topology}]
The order-preserving $f$ from \Cref{prop:surjective-poset-map} restricts to a surjection from $(u,v)_w$ either
to $\mathcal{S}_1 \coloneqq \mathcal{P}(A_w(u,v)) \setminus \{\emptyset, A_w(u,v)\}$ or to $\mathcal{S}_2 \coloneqq \mathcal{P}(A_w(u,v)) \setminus \{\emptyset\}$ according to whether $A_w(u,v) \in \im(f|_{(u,v)_w})$, that is, according to whether $\bigvee A_w(u,v) <_w v$.  Notice that, since $P_w$ is a lattice by \Cref{thm:bip-is-lattice}, the dual fiber $(u,v)_{w}^{\ge S}$ 
has a unique minimal element, namely $\bigvee S$, for each $\emptyset \neq S\subseteq A_w(u,v)$.  This is a cone point in $\Delta((u,v)_{w}^{\ge S})$, so this complex is contractible. Thus we may apply apply the dual Quillen Fiber Lemma (\Cref{cor:dual-quillen}) to deduce that $\Delta((u,v)_w)$ is homotopy equivalent to 
$\Delta(\mathcal{S}_1)$ or to $\Delta(\mathcal{S}_2)$.  The complex $\Delta(\mathcal{S}_2)$ is contractible due to having the cone point $A_w(u,v)$. The poset $\mathcal{S}_1$ is the face poset of the boundary of a simplex, so its order complex is the barycentric subdivision of this sphere. Thus $\Delta(\mathcal{S}_1)$ is a sphere $\SS^{|A_w(u,v)|-2}$.
We apply \Cref{cor:face-description} to deduce that we are in the case of a sphere exactly when $Q_{u,v}$ is a face of $Q_w$. Finally, the claim about $\mu_{P_w}$ follows easily by applying \Cref{prop:philip-hall}.
 \end{proof}

\section{Higher connectivity of flip graphs}

In this section we study flip graphs of chains in 1-skeleton posets and prove \Cref{thm:intro-moves}.

\begin{defn}
Let $Q$ be $\mathcal{O}$-directionally simple. Let $\mathcal{M}(Q,\mathcal{O})$ denote the graph whose vertices are the directed paths from source to sink in $\mathcal{O}$ and whose edges connect pairs of such paths differing by a single flip across a 2-face of $Q$. 

When $\mathcal{O}$ is the Hasse diagram of a poset $P$, for each $u \leq_P v$ we define $\mathcal{M}(u,v,\mathcal{O})$ as 
the graph whose vertices are the saturated chains from $u$ to $v$ and whose edges connect pairs of saturated chains differing by a single flip across a 2-face of $Q$. In this case the vertices of $\mathcal{M}(Q,\mathcal{O})$ are the maximal chains of $P$.
\end{defn}

The arguments below are inspired by the proof of \cite[Thm.~2.1]{AER} in the setting of simple polytopes, but some of our adaptations to the directionally simple setting are a bit delicate. This more general setting is important for application to BCFW  bridge decompositions, however, as Bruhat interval polytopes $Q_w$ of Grassmannian permutations $w$ are almost never simple (\cite[Thm.~1.6]{GG} implies a pattern avoidance characterization for such $w$). We have retained notation from \cite{AER} when possible.

\begin{figure} 
\begin{tikzpicture}[scale=0.4]

\draw (7,0)--(4,1.7)--(2,4)--(0,7)--(1,10)--(3.5,12)--(7,13); 
\draw [fill] (4,1.7) circle [radius=0.05];
\draw [fill] (3.5,12) circle [radius=0.05];
\draw [fill] (10,1.7) circle [radius=0.05];
\draw [fill] (10,12) circle [radius=0.05];

\draw (4,1.7)--(5,5)--(8,7)--(10,4.5)--(10,1.7);

\draw [fill] (5,5) circle [radius=0.05];
\draw [fill] (8,7) circle [radius=0.05];
\draw [fill] (10,4.5) circle [radius=0.05];

\draw (5,5)--(5,7)--(4,8)--(2.5,7)--(2,4);

\draw [fill] (5,7) circle [radius=0.05];
\draw [fill] (4,8) circle [radius=0.05];
\draw [fill] (2.5,7) circle [radius=0.05];

\draw (2.5,7)--(1,10);

\draw(13,10)--(10,12)--(7,13);
\draw(7,0)--(10,1.7)--(13,4)--(14,7)--(13,10); 
\draw [dashed] (8,7)--(7,13);
\draw [dashed] (4,8)--(7,13);

\draw[thick,color=blue] (7,0)--(4,1.7)--(2,4)--(0,7)--(1,10)--(3.5,12)--(7,13);

\draw (2,4)--(2.5,7)--(1,10);
\draw[thick,color=green] (7,0.1)--(4,1.8)--(5,5)--(8,7);
\draw[dashed,color=green] (8,7)--(7,13);
\draw[dashed,color=orange] (8.05,7)--(7.05,13);
\draw[thick,color=orange] (7,0.1)--(10,1.8)--(10,4.5)--(8,7);
\draw[thick,color=purple] (7,0)--(10,1.7)--(13,4)--(14,7)--(13,10)--(10,12)--(7,13);

\node [below] at (7,0){\scriptsize $x$}; 
\node [above] at (7,13){\scriptsize $x'$}; 
\node [below left,color=orange] at (10.5,7){\scriptsize $\gamma_2^F$}; 
\node [below left,color=green] at (7.8,6.2){\scriptsize $\gamma_1^F$}; 
\node [below right] at (9.8,1.7){\scriptsize $a_2$}; 
\node [above right] at (7.7,9){\scriptsize $p$}; 
\node [below right] at (3,1.7){\scriptsize $a_1$}; 
\node [above left,color=blue] at (0.5,4.3){\scriptsize $\gamma_1$}; 
\node [above left,color=purple] at (15.3,4.3){\scriptsize $\gamma_2$}; 
\node [above right] at (6.5,2){\scriptsize $F$}; 
\node [above right] at (2.8,4.4){\scriptsize $F'$}; 
\node [above] at (1.3,6.4){\scriptsize $F''$};

\draw [fill, ] (7,0) circle [radius=0.05]; 
\draw [fill] (2,4) circle [radius=0.05]; 
\draw [fill] (13,4) circle [radius=0.05]; 
\draw [fill] (0,7) circle [radius=0.05]; 
\draw [fill] (14,7) circle [radius=0.05];
\draw [fill, ] (1,10) circle [radius=0.05]; 
\draw [fill] (13,10) circle [radius=0.05]; 
\draw [fill] (7,13) circle [radius=0.05]; 

\end{tikzpicture}
\caption{An example showing some of the chains  in the path $\gamma_1 * \gamma_2$ constructed in \Cref{lem:path-exists}. The first flip is across the face $F''$.}
\label{fig:2-face-first-move}
\end{figure}

\begin{lem}\label{lem:path-exists}
Let $Q$ be an $\mathcal{O}$-directionally simple polytope such that $\mathcal{O}$ is the Hasse diagram of a lattice $L$. Then, for any $u \leq_L v$, and any chains $\gamma_1,\gamma_2 \in \mathcal{M}(u,v,\mathcal{O})$, there exists a path from $\gamma_1$ to $\gamma_2$ such that all intermediate chains $\gamma$ contain all vertices in $\gamma_1 \cap \gamma_2$. 
\end{lem}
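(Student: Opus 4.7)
The plan is to induct on the size of $[u,v]_L$, with the case $u = v$ trivial. Enumerate the common vertices as $u = x_0 < x_1 < \cdots < x_k = v$. If $k \geq 2$, each subinterval $[x_{i-1}, x_i]_L$ is strictly smaller than $[u, v]_L$, so the inductive hypothesis applied to the restricted pairs $\gamma_1|_{[x_{i-1}, x_i]}$ and $\gamma_2|_{[x_{i-1}, x_i]}$ yields, on each subinterval, a path of moves preserving the common vertices of that restriction. Concatenating these (handling one subinterval at a time while leaving the other portions of the chain fixed) produces a path in $\mathcal{M}(u, v, \mathcal{O})$ whose intermediate chains all contain $\{x_0, \ldots, x_k\} = \gamma_1 \cap \gamma_2$.

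The essential case is $k = 1$, where the requirement simplifies to having intermediate chains pass through $u$ and $v$. Let $a_1, a_2$ be the first atoms of $\gamma_1, \gamma_2$; by assumption these are distinct. Consider the 2-face $F := F_{a_1, a_2}$, whose source is $u$ and whose sink, by \Cref{thm:join-equals-pseudo}, is $s := \psj(a_1, a_2) = a_1 \vee a_2$; since $a_1, a_2 \leq v$ in $L$ we have $s \leq v$. Let $\sigma_i$ denote the side of $\partial F$ from $u$ to $s$ passing through $a_i$, pick any saturated chain $\tau$ from $s$ to $v$, and define $\gamma_i^* := \sigma_i \cdot \tau$. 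Then $\gamma_1^*$ and $\gamma_2^*$ are related by a single move across $F$.

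Applying the inductive hypothesis to the strictly smaller interval $[a_1, v]_L$ connects $\gamma_1|_{[a_1,v]}$ and $\gamma_1^*|_{[a_1,v]}$ by a sequence of moves in $\mathcal{M}(a_1,v,\mathcal{O})$; prepending the cover $u \lessdot a_1$ to each intermediate chain lifts this to a path in $\mathcal{M}(u, v, \mathcal{O})$ from $\gamma_1$ to $\gamma_1^*$ whose chains all begin at $u$ and end at $v$. Symmetrically we obtain a path from $\gamma_2$ to $\gamma_2^*$. Concatenating these with the single move between $\gamma_1^*$ and $\gamma_2^*$ yields the desired path from $\gamma_1$ to $\gamma_2$. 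The main obstacle is precisely this case $\gamma_1 \cap \gamma_2 = \{u, v\}$: one must localize the discrepancy between the two chains into a single 2-face flip. Routing both chains through $s = a_1 \vee a_2$ via $F_{a_1, a_2}$ achieves this, and \Cref{thm:join-equals-pseudo} is exactly the input that lets the lattice join of the initial atoms of $\gamma_1$ and $\gamma_2$ be identified with the sink of the corresponding 2-face, so that $s$ lies below $v$ and the tail $\tau$ needed to anchor both $\gamma_1^*$ and $\gamma_2^*$ exists.
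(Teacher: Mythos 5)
Your proof is correct and follows essentially the same route as the paper's: both localize the first divergence of the two chains into a single flip across the 2-face spanned by the two divergent cover edges, using that its sink is the join $a_1 \vee a_2 \leq v$ (join $=$ pseudojoin), and then recurse while keeping the common vertices fixed. Your induction on $|[u,v]_L|$ with an initial splitting at the common vertices is just slightly different bookkeeping of the paper's recursive construction of $\gamma_1 * \gamma_2$, which instead preserves the common vertices by routing the shared continuation through the next common vertex $x'$.
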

\begin{proof}  
We construct such a path, denoted $\gamma_1 * \gamma_2$, as follows.

Suppose that $\gamma_1 \neq \gamma_2$ and let $x$ be the lowest element of $\gamma_1 \cap \gamma_2$ such that $x$ is covered by distinct  elements in $\gamma_1$ and $\gamma_2$. Let $x'$ be the lowest element of $\gamma_{1} \cap \gamma_{2}$ that is greater than $x$; such an element exists, since $\gamma_{1}$ and $\gamma_{2}$ have the same sink. Let $e_1$ (resp. $e_2$)  be the edge upward from $x$ in $\gamma_1$  (resp. $\gamma_2$ ). By directional simplicity, there exists a 2-face $F$ of $Q$ containing  $e_1$ and $e_2$. By \cite[Thms.~4.6 \& 5.13]{He}, $\sink(F)=a_1 \vee a_2$, where $a_1,a_2$ are the vertices connected to $x$ by $e_1,e_2$. Since $x'\geq a_1, a_2$, the lattice property implies that there exists a saturated chain $p$ from $\sink(F)$ to $\sink(Q)$ passing through $x'$.  If $\sink(F) \in \gamma_1$, then choose $p$ to coincide with $\gamma_1$ on $[\sink(F),\sink(Q)]$. In any event, choose $p$ to coincide with $\gamma_1$ on $[x',\sink(Q)]$.  

Let $\gamma_1^F$ be the chain that coincides with $\gamma_1$  on $[\srce(Q),a_1]$, 
then follows the unique path in  the boundary of $F$ proceeding upward from $a_1$ to $\sink(F)$, then coincides with $p$ on $[\sink(F),\sink(Q)]$.  
Likewise, let $\gamma_2^F$ be the maximal chain that coincides with $\gamma_2$ on $[\srce(Q),a_2]$, then follows the unique  path  in  the boundary of $F$ proceeding upward from  $a_2$  to $\sink(F)$, then coincides with $p$ on $[\sink(F),\sink(Q)]$.   

By induction on the length of the longest path from $x$ to $\sink(Q)$, we may assume that the paths $\gamma_1 * \gamma_1^F$ and $\gamma_2^F * \gamma_2$ have already been specified, and use these in our construction of the path $\gamma_1 * \gamma_2$; specifically, we use that $\gamma_i$ coincides with $\gamma_i^F$ on $[\srce(Q),a_i]$ for $i=1,2$. Now let $\gamma_1 * \gamma_2$ be the path in $\mathcal{M}(u,v,\mathcal{O})$ determined by the following sequence of flips: first apply the inductively-defined series of flips in $\gamma_1 * \gamma_1^F$; then apply the flip across $F$ from $\gamma_1^F$ to $\gamma_2^F$; finally, apply the inductively-defined series of flips in $\gamma_2^F*\gamma_2$. Note that this construction depends on the choices of the path $p$ in each iteration. It is clear by the choices of $x,x'$ that all intermediate chains $\gamma$ in $\gamma_1 * \gamma_2$ contain all vertices of $\gamma_1 \cap \gamma_2$.
\end{proof}

The following special feature of the path $\gamma_1 * \gamma_2$ constructed in the proof of \Cref{lem:path-exists} will be useful going forward.

\begin{cor}\label{cor:sink-lemma}
In the notation of \Cref{lem:path-exists} and its proof, if $\sink(F)\in \gamma_1$, then all elements of $\gamma_1 * \gamma_2$ other than $\gamma_1$ contain the edge $e_2$.
\end{cor}
\begin{proof}
Our choice of path $p$ in the case when $\sink (F)\in \gamma_1$ ensures in this case that $\gamma_1=\gamma_1^F$ and therefore that the earliest flip in $\gamma_1 * \gamma_2$ is the flip across $F$ producing $\gamma_2^F$. Since $\gamma_2^F$ and $\gamma_2$ both contain $e_2$, the lemma guarantees that so do all subsequent elements of $\gamma_1 * \gamma_2$. 
\end{proof}

\Cref{lem:path-exists} implies in particular that $\mathcal{M}(u,v,\mathcal{O})$ is connected. Our goal in the remainder of the section is to study the higher vertex connectivity of $\mathcal{M}(u,v,\mathcal{O})$.

\begin{lem}\label{lem:unique-sequence}
Let $Q$ be an $\mathcal{O}$-directionally simple polytope such that $\mathcal{O}$ is the Hasse diagram of a lattice $L$.   Suppose  $\gamma $ and $\gamma_1$ are saturated chains in an interval $[u,v]_L$ that include distinct cover relations upward from $u$.  Let $\gamma_2$ be any node, distinct from $\gamma $, 
on the path $\gamma * \gamma_1$.   Denote by $e_1$  the lowest edge in $\gamma_1$, and denote by  $e_2$ the lowest edge in $\gamma_2$ that is  not in $\gamma$.  
  
Then there exists an  alternating sequence $(\epsilon_0,F_1,\epsilon_1,F_2,\dots ,F_r,\epsilon_r)$ such that  $\epsilon_0 = e_1$, $\epsilon_r = e_2$, the edges $\epsilon_0,\dots ,\epsilon_r$ are distinct, 
each  $\epsilon_i$ is an edge of $Q$ whose lower vertex $x_i$ is in $\gamma$,  and
 each $F_i$ is a 2-face of $Q$ containing  $\epsilon_{i-1}, \epsilon_i$ and  all edges and vertices of $\gamma$ between $x_{i-1}$ and $x_i$.
\end{lem}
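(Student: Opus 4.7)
My plan is to induct on the length $\ell(v) - \ell(u)$ of the interval, exploiting the recursive structure of the $\ast$-construction from \Cref{lem:path-exists}. That construction decomposes the path as
\[
\gamma \ast \gamma_1 = (\gamma \ast \gamma^F) \cdot m_F \cdot (\gamma_1^F \ast \gamma_1),
\]
where $F$ is the 2-face containing the distinct initial edges $e$ of $\gamma$ and $e_1$ of $\gamma_1$, and $m_F$ is the single move across $F$ from $\gamma^F$ to $\gamma_1^F$.

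I would first handle the easy case where $\gamma_2$ lies on $(\gamma_1^F \ast \gamma_1) \cup \{\gamma_1^F\}$. Both $\gamma_1^F$ and $\gamma_1$ begin with $e_1$, so their lowest divergence vertex is strictly above $u$; all moves in the sub-path $\gamma_1^F \ast \gamma_1$ then happen above this vertex (by the same segment-preservation phenomenon underlying \Cref{lem:lower-first}), so they preserve the initial edge $e_1$. Hence $\gamma_2$ begins with $e_1 \notin \gamma$, giving $e_2 = e_1$, and the trivial sequence $(\epsilon_0 = e_1)$ with $r = 0$ satisfies the lemma.

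In the main case $\gamma_2 \in (\gamma \ast \gamma^F) \cup \{\gamma^F\}$, the chain $\gamma_2$ shares the initial edge $e$ with $\gamma$, so we may assume $\gamma^F \neq \gamma$ (otherwise $\gamma_2 = \gamma$ and $e_2$ is undefined). The convention in the $\ast$-construction that $p$ coincides with $\gamma$ beyond $\sink(F)$ whenever $\sink(F) \in \gamma$ forces $\gamma^F = \gamma$ whenever $\gamma$ traverses the entire $e$-side of $F$. Under our assumption $\gamma^F \neq \gamma$, then, $\gamma$ must leave the $e$-side of $F$ at some interior vertex $x_1$ with $u < x_1 < \sink(F)$. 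I would set $F_1 = F$, $x_0 = u$, $\epsilon_0 = e_1$, and take $\epsilon_1$ to be the edge of the $e$-side of $F$ outgoing from $x_1$; then both $\epsilon_0$ and $\epsilon_1$ lie in $F$, the portion of $\gamma$ from $u$ to $x_1$ lies on the $e$-side and hence in $F$, and $\epsilon_1 \notin \gamma$ because $\gamma$ leaves the $e$-side at $x_1$.

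Since $x_1$ is the lowest divergence vertex of $\gamma$ and $\gamma^F$, every chain on $\gamma \ast \gamma^F$ agrees with $\gamma$ on $[u, x_1]$, so $\gamma_2|_{[x_1,v]}$ is a chain on the restricted path $\gamma|_{[x_1,v]} \ast \gamma^F|_{[x_1,v]}$, whose endpoints have distinct initial cover relations at $x_1$. Since $\ell(v) - \ell(x_1) < \ell(v) - \ell(u)$, the inductive hypothesis supplies an alternating sequence $(\epsilon_1, F_2, \ldots, F_r, \epsilon_r = e_2)$; concatenating with $(\epsilon_0 = e_1, F_1 = F)$ produces the required full sequence. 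The main obstacle I foresee is verifying that the restriction of the recursively-constructed path $\gamma \ast \gamma^F$ to $[x_1, v]_L$ really coincides with the $\ast$-path produced by the construction on that sub-interval; this needs unpacking the recursive definition and checking that the auxiliary chain $p$ and all subsequent choices restrict compatibly to $[x_1, v]_L$.
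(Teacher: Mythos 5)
Your argument is correct in substance, but it takes a genuinely different route from the paper's. The paper builds the alternating sequence directly and greedily: it takes $F_1$ to be the 2-face spanned by $e_1$ and the lowest edge of $\gamma$, defines $\epsilon_1$ as the lowest edge not in $\gamma$ on the side of $\partial F_1$ not containing $e_1$ (with existence argued by contradiction via \Cref{lem:lower-first}), then repeats 2-face by 2-face up along $\gamma$, with termination justified by the construction in \Cref{lem:path-exists}. You instead split $\gamma * \gamma_1$ at the single move across the 2-face $F$ spanned by the two initial edges and induct on the subinterval above the first vertex $x_1$ where $\gamma$ leaves the $e$-side of $F$; the sequence you produce is the same one, but your route makes termination automatic and replaces the paper's existence argument for $\epsilon_1$ by the cleaner observation that if $\gamma$ traverses the whole $e$-side then the convention on $p$ forces $\gamma^F=\gamma$, so $\gamma_2$ lies on the $\gamma_1^F * \gamma_1$ side and $e_2=e_1$. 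The price is exactly the compatibility check you flag: you need that the restriction of the given path $\gamma * \gamma^F$ to $[x_1,v]_L$ is itself a path produced by the construction of \Cref{lem:path-exists} on that subinterval. This is fillable but should be recorded: every chain occurring in the construction of $\gamma * \gamma^F$ contains the common prefix $\gamma|_{[u,x_1]}$ (by the vertex-retention conclusion of \Cref{lem:path-exists}), the lowest divergence vertex at every recursive stage is at least $x_1$, and every auxiliary chain $p$ chosen along the way lies above the sink of the relevant 2-face, hence above $x_1$; so the construction never alters $[u,x_1]$ and restricting to $[x_1,v]_L$ yields a valid $*$-path for the corresponding choices --- and since the lemma is stated for an arbitrary (choice-dependent) path $\gamma * \gamma_1$, the inductive hypothesis does apply to it. Two cosmetic points: in your first case the sequence degenerates to $r=0$ with $\epsilon_0=e_1=e_2$ (equivalently take $r=1$, $F_1=F$, $\epsilon_1=e_1$, matching the stated format and sufficing for \Cref{lem:what-determines}); and since the general lattice $L$ is not assumed graded, induct on, say, $|[u,v]_L|$ rather than $\ell(v)-\ell(u)$.
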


\begin{proof}
By directional simplicity we may choose $F_1$ to be the unique 2-face containing both $e_1$ and the lowest edge in $\gamma$, which are distinct by hypothesis. If $\sink (F_1) \in \gamma$, then  $e_2 = e_1$ by \Cref{cor:sink-lemma}.  In this case our desired alternating sequence has $r=0$ and just consists of $(\epsilon_0)$, allowing us to assume $\sink (F_1)\not\in \gamma $ henceforth.
  
 If $e_2$ is an edge in $F_1$, let $\epsilon_1=e_2$, in which case $r=1$ and we are done. Otherwise, define $\epsilon_1$ to be the lowest edge not contained in $\gamma$ that is in the path, in the boundary of $F_1$, from $\srce(F_1)$ to $\sink(F_1)$ that does not contain $e_1$. Such an edge $\epsilon_1$ exists, since otherwise $\sink(F_1)$ would be in $\gamma$. For $\epsilon_1 \ne e_2$ let $F_2$ be the unique 2-face containing both $\epsilon_1$ and the edge in $\gamma$ that proceeds upward from a vertex in  $\epsilon_1$.  Repeat in this manner until reaching a 2-face that contains $e_2$; this process terminates by virtue of the construction in \Cref{lem:path-exists}. 
\end{proof}

\begin{figure} 
\begin{tikzpicture}[scale=0.4]

\draw [color=blue] (7,0)--(4,1.7)--(2,4)--(0,7)--(1,10)--(3.5,12)--(7,13); 
\draw [fill] (4,1.7) circle [radius=0.05];
\draw [fill] (3.5,12) circle [radius=0.05];
\draw [fill] (10,1.7) circle [radius=0.05];
\draw [fill] (10,12) circle [radius=0.05];

\draw (4,1.7)--(5,5)--(8,7)--(10,4.5)--(10,1.7);

\draw [fill] (5,5) circle [radius=0.05];
\draw [fill] (8,7) circle [radius=0.05];
\draw [fill] (10,4.5) circle [radius=0.05];

\draw (5,5)--(5,7)--(4,8)--(2.5,7)--(2,4);

\draw [fill] (5,7) circle [radius=0.05];
\draw [fill] (4,8) circle [radius=0.05];
\draw [fill] (2.5,7) circle [radius=0.05];

\draw (2.5,7)--(1,10);

\draw(13,10)--(10,12)--(7,13);
\draw(7,0)--(10,1.7)--(13,4)--(14,7)--(13,10); 
\draw [dashed] (8,7)--(7,13);
\draw [dashed] (4,8)--(7,13);


\draw[thick,color=red] (2,4)--(2.5,7);
\draw[thick,color=red] (4,1.7)--(5,5);
\draw[thick,color=red] (7,0)--(10,1.7);
\draw[thick,color=green](2.5,7)--(4,8)--(7,13);

\node [below right] at (8.4,0.9){\scriptsize $\epsilon_0$}; 
\node [above right] at (4.3,2.4){\scriptsize $\epsilon_1$}; 
\node [below right] at (2,5.5){\scriptsize $\epsilon_2$}; 
\node [above left,color=blue] at (0.5,4.3){\scriptsize $\gamma $}; 
\node [above left] at (15.7,4.3){\scriptsize $\gamma_1$}; 
\node [above left,color=green] at (6.5,8.8){\scriptsize $\gamma_2$};
\node [above right] at (6.5,2.5){\scriptsize $F_1$}; 
\node [above right] at (2.9,5){\scriptsize $F_2$}; 

\draw [fill, ] (7,0) circle [radius=0.05]; 
\draw [fill] (2,4) circle [radius=0.05]; 
\draw [fill] (13,4) circle [radius=0.05]; 
\draw [fill] (0,7) circle [radius=0.05]; 
\draw [fill] (14,7) circle [radius=0.05];
\draw [fill, ] (1,10) circle [radius=0.05]; 
\draw [fill] (13,10) circle [radius=0.05]; 
\draw [fill] (7,13) circle [radius=0.05]; 

\end{tikzpicture}
\caption{An example of a sequence $(\epsilon_0,F_1,\epsilon_1,\dots ,F_r,\epsilon_r)$ from \Cref{lem:unique-sequence} with  $r=2$.}
\label{fig:edge-face-sequence}
\end{figure}

\begin{lem}\label{lem:what-determines}
 The sequence $(\epsilon_0,F_1,\epsilon_1,F_2,\dots ,F_r,\epsilon_r)$ from  
 \Cref{lem:unique-sequence}
  is uniquely determined by $\gamma $ and $e_2$. In particular, $\gamma$ and $e_2$ determine $e_1$. 
\end{lem}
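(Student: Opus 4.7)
The plan is to prove uniqueness by comparing two hypothetical sequences $(\epsilon_0,F_1,\ldots,F_r,\epsilon_r)$ and $(\epsilon_0',F_1',\ldots,F_{r'}',\epsilon_{r'}')$ satisfying the conditions of \Cref{lem:unique-sequence} with the same $\gamma$ and the same terminal edge $\epsilon_r = \epsilon_{r'}' = e_2$, and showing they must coincide. I would proceed by reverse induction on $r$, establishing at each step that the terminal 2-face and the penultimate edge agree between the two sequences, then peeling off the top and invoking the inductive hypothesis.

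First I would establish the preliminary observation that in any valid sequence we have $\srce(F_r) <_L x_r$ strictly, where $x_r$ denotes the lower vertex of $e_2$. If instead $\srce(F_r) = x_r$, then $e_2$ would be an outgoing edge of $F_r$ at its source; since the only two outgoing edges at the source of $F_r$ are the first $\gamma$-edge at $\srce(F_r)$ and $\epsilon_{r-1}$, and since $e_2 \notin \gamma$, this would force $e_2 = \epsilon_{r-1}$, contradicting the fact that the construction in \Cref{lem:unique-sequence} iterates precisely as long as $\epsilon_i \neq e_2$.

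The central step is then to show $F_r = F_{r'}'$ via a ``two shared edges'' argument. Let $s = \srce(F_r)$ and $s' = \srce(F_{r'}')$, both strictly below $x_r$ on $\gamma$; without loss of generality assume $s \leq_L s'$. The $\gamma$-segment from $s$ to $x_r$ lies in $F_r$ by the conditions of \Cref{lem:unique-sequence}, and passes through $s'$, so the next $\gamma$-edge $g$ above $s'$ lies in $F_r$. Simultaneously, $g$ is the first edge of the $\gamma$-segment from $s'$ to $x_r$, so $g$ also lies in $F_{r'}'$. Thus both $F_r$ and $F_{r'}'$ contain the two distinct edges $g$ (which is in $\gamma$) and $e_2$ (which is not). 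Since distinct 2-faces of a polytope intersect in at most one edge, we must have $F_r = F_{r'}'$, which forces $s = s'$. By directional simplicity, the two outgoing edges at the common source $s$ in this face are $g$ and one other edge, which must equal both $\epsilon_{r-1}$ and $\epsilon_{r'-1}'$. Hence $\epsilon_{r-1} = \epsilon_{r'-1}'$, and applying the inductive hypothesis to the truncated sequences---which are themselves sequences of the form considered in \Cref{lem:unique-sequence} since $\epsilon_{r-1} \notin \gamma$---completes the argument.

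The main obstacle is the careful verification that $F_r$ and $F_{r'}'$ share two distinct edges; this requires the preliminary observation to guarantee that $g$ is well-defined and distinct from $e_2$, and it relies on both the structural conditions of \Cref{lem:unique-sequence} and the elementary polytope fact that any two distinct 2-faces meet in at most one edge. The base case of the induction, where one of the sequences has $r = 0$ (so $e_1 = e_2$ and $x_r = u$), is handled separately by noting that the preliminary observation then precludes the other sequence from having positive length.
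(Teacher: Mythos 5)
Your proposal is correct and is essentially the paper's own argument: both proofs recover the sequence from the top down, pinning down the terminal 2-face by the fact that two distinct edges (here $e_2$ together with an edge of $\gamma$ below its lower vertex) lie in at most one 2-face of a polytope, then identifying $\epsilon_{r-1}$ as the unique edge not in $\gamma$ emanating from that face's source, and iterating downward to $F_1$ and $\epsilon_0=e_1$. The only difference is presentational---you compare two candidate sequences by reverse induction, while the paper states the same peeling as a direct unique reconstruction from $\gamma$ and $e_2$---so the content coincides.
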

\begin{proof}
If $e_2$ contains the vertex $u$, then $r=0$ and so the sequence is $(\epsilon_0)$ and $e_1=e_2$, so the claim is satisfied. Otherwise, by the requirements for $F_r$  in  \Cref{lem:unique-sequence}, the face $F_r$ includes both the edge $\epsilon_r=e_2$ and an edge in $\gamma $ leading upward to $\epsilon_r$.  We showed such an $F_r$ exists in the proof of \Cref{lem:unique-sequence}. There can only be one 2-face in a polytope containing a specified pair of edges, so this 2-face is uniquely determined.  By virtue of  the edge $\epsilon_{r-1}$ having its lower vertex  in $\gamma $ and the entire path from $x_{r-1}$ to $x_r$ also being  in $\gamma $, 
the edge $\epsilon_{r-1}$ must be the unique  lowest edge  not contained in $\gamma $ that is in the boundary of $F_r$.  We likewise deduce  that  $F_{r-1}$ is  uniquely determined as the only 2-face that includes both $\epsilon_{r-1}$ and the edge in $\gamma $ leading up to  $\epsilon_{r-1}$.  Repeating in this manner shows that $F_1$ is likewise uniquely determined, and $F_1$  uniquely determines the edge $\epsilon_0$  in the  boundary of $F_1$ that proceeds upward from the source of $F_1$ and is not in $\gamma $.
\end{proof}

\begin{lem}\label{lem:vertex-disjoint}
Let $Q$ be an $\mathcal{O}$-directionally simple polytope such that $\mathcal{O}$ is the Hasse diagram of a lattice $L$.   If $\gamma, \gamma_1$, and $\gamma_1'$ are saturated chains in an interval $[u,v]_L$ using distinct edges upward from $u$, then $(\gamma * \gamma_1) \cap (\gamma * \gamma_1')=\{\gamma\}$.
\end{lem}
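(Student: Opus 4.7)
The plan is to argue by contradiction, leveraging the uniqueness from \Cref{lem:what-determines}. The idea is that any intermediate chain $\gamma_2$ on one of the two paths, together with the reference chain $\gamma$, uniquely recovers the first edge of the chain at which that path started; so if $\gamma_2$ were to lie on both $\gamma * \gamma_1$ and $\gamma * \gamma_1'$, the first edges of $\gamma_1$ and $\gamma_1'$ would be forced to coincide, contradicting the distinctness hypothesis.

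First I would set up the contradiction: suppose there were some $\gamma_2 \in (\gamma * \gamma_1) \cap (\gamma * \gamma_1')$ with $\gamma_2 \neq \gamma$. Since $\gamma$ and $\gamma_2$ are saturated chains of equal length in $[u,v]_L$ and are distinct, $\gamma_2$ must contain at least one edge not in $\gamma$. I would let $e_2$ denote the lowest such edge, emphasizing that $e_2$ depends only on the pair $(\gamma, \gamma_2)$ and not on which path $\gamma * \gamma_1$ or $\gamma * \gamma_1'$ we think of $\gamma_2$ as living on.

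Next, using the hypothesis that $\gamma, \gamma_1, \gamma_1'$ use pairwise distinct cover relations upward from $u$, I would apply \Cref{lem:what-determines} to the path $\gamma * \gamma_1$ to conclude that $(\gamma, e_2)$ uniquely determines the lowest edge $e_1$ of $\gamma_1$, and then again to $\gamma * \gamma_1'$ to conclude that the same data $(\gamma, e_2)$ uniquely determines the lowest edge $e_1'$ of $\gamma_1'$. Since both applications use identical input, $e_1 = e_1'$, contradicting the assumption that $\gamma_1$ and $\gamma_1'$ use distinct edges out of $u$.

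The main obstacle has already been surmounted in the earlier lemmas: building the alternating sequence of \Cref{lem:unique-sequence} and establishing its uniqueness in \Cref{lem:what-determines} is where the combinatorial and geometric work happens. Given those tools, the present statement is essentially a bookkeeping argument. The only subtlety worth flagging in the write-up is the independence of $e_2$ from the ambient path, so that the two applications of \Cref{lem:what-determines} genuinely begin from the same pair $(\gamma, e_2)$; this is immediate from the definition but should be made explicit.
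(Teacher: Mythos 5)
Your proposal is correct and follows essentially the same route as the paper: both arguments take an intermediate chain, extract its lowest edge $e_2$ not in $\gamma$, and invoke the uniqueness statement of \Cref{lem:what-determines} to recover the initial edge upward from $u$, contradicting the distinctness of the first edges of $\gamma_1$ and $\gamma_1'$. The only difference is cosmetic (you argue by direct contradiction on a common node, while the paper shows $e_2 \neq e_2'$ for any pair of nodes on the two paths), and your explicit remark that $e_2$ depends only on $(\gamma,\gamma_2)$ is a welcome clarification.
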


\begin{proof}
Suppose that $\gamma \neq \gamma '' \in (\gamma * \gamma_1)\cap (\gamma * \gamma_1')$.
Since $\gamma_1$ and $\gamma_1'$ have distinct lowest edges, $\gamma ''$ must have the same lowest edge as $\gamma$. Letting $F$ (resp. $F'$) denote the unique 2-face containing the lowest edges in $\gamma $ and $\gamma_1 $ (resp. $\gamma $ and $\gamma_1'$), it follows that $\gamma''$ must occur prior to the flip across $F$ (resp. $F'$) within $\gamma * \gamma_1$ (resp. $\gamma * \gamma_1'$). If $\sink (F)\in \gamma $ this would contradict \Cref{cor:sink-lemma}. Thus it suffices to consider the case
$\sink (F) \not \in \gamma $ (resp. $\sink (F') \not \in \gamma $). 
This implies that $r\ge 1$ in  the sequence $(\epsilon_0,F_1,\dots ,\epsilon_r)$ determined according to \Cref{lem:unique-sequence} by $\gamma$ and the lowest edge in $\gamma ''$ not in $\gamma$. This allows us to  assume $r>0$ in the remainder of this proof.

Consider any node $\gamma_2$ on the path $\gamma*\gamma_1$ and any node $\gamma_2'$ on the path $\gamma * \gamma_1'$.  Let $e_2$ be the lowest edge in $\gamma_2$ not shared with $\gamma $, and let $e_2'$ be the lowest edge in $\gamma_2' $ not shared with $\gamma $.   To prove $\gamma_2 \ne \gamma_2'$, it suffices to prove $e_2 \ne e_2'$.  If $e_2 = e_2'$, then \Cref{lem:what-determines} shows that $\gamma $ and $e_2$ would give rise to the same  alternating sequence of edges and 2-faces that  $\gamma $ and $e_2'$ would produce.  In particular, they would give rise to the same edges upward from $u$, contradicting $\gamma_1$ and $\gamma_1'$ having  distinct edges upward from $u$.
\end{proof}

We are now ready to prove \Cref{thm:intro-moves}, our second main theorem.

\begin{proof}[Proof of \Cref{thm:intro-moves}]
The case with $a=1$ is handled by \Cref{lem:path-exists}. The proof of the case with $a \geq 2$ is via the same argument as in \cite{AER}, with the dimension $d$ of a simple polytope replaced by the number $a$ of atoms of $[u,v]_L$ throughout, using \Cref{lem:path-exists,lem:unique-sequence,lem:what-determines,lem:vertex-disjoint} to justify various claims under our modified hypotheses. We recall this argument now, as it applies in our setting.
     
Let $\Gamma$ be any subset of size at most $a-2$ of the set of vertices of $\mathcal{M}(u,v,\mathcal{O})$.  Let $E$ be the set of atoms of $[u,v]_L$ that are contained in chains from $\Gamma$. Let $M$ be the induced subgraph of $\mathcal{M}(u,v,\mathcal{O})\setminus \Gamma $ whose vertices are those chains $\gamma$ that do not use any cover relation from $E$. Notice that $|M|\ge 2$, since at least 2 atoms of $[u,v]_L$ lie outside $E$ and there is at least one saturated chain containing each such atom.

Given any $\gamma_1,\gamma_2\in M$, observe that no vertex on the path $\gamma_1 * \gamma_2$ uses any atom from $E$, since all $\gamma \in \gamma_1 * \gamma_2$ use an atom used by either $\gamma_1$ or $\gamma_2$.  Thus $M$ is connected.
     
It remains to prove that for any chain $\gamma \not \in \Gamma $ whose lowest cover relation $e$ uses an atom from $E$ there is a path from $\gamma $ to an element of $M$ that stays within  $\mathcal{M}(u,v,\mathcal{O}) \setminus \Gamma$.  Let $k$ be the size of the subset of $\Gamma $ consisting of chains using the edge $e$.  The number of cover relations upward from $u$ in $[u,v]_L$ that are not in $E$ equals  $a - |E| \ge a - |\Gamma | + (k-1)  \ge k+1$.  This implies the existence of $k+1$ saturated chains in $M$ all involving distinct cover relations upward from $u$.  By \Cref{lem:vertex-disjoint}, the paths in $\mathcal{M}(u,v,\mathcal{O})$ from $\gamma $ to these $k+1$ saturated chains are vertex disjoint except for sharing the initial node $\gamma $.  Thus at least one of these paths  avoids all $k$ elements of $\Gamma $ having initial edge $e$.  By the construction from \Cref{lem:path-exists}, this path stays within $\mathcal{M}(u,v,\mathcal{O}) \setminus \Gamma $.  
\end{proof}

\section*{Acknowledgements}

We are grateful to Richard Stanley for suggesting the investigation of the topology of Bruhat interval polytope posets. We also thank Melissa Sherman-Bennett and Lauren Williams for helpful conversations about BCFW bridge decompositions. We also thank  the anonymous referee for very helpful feedback which we have incorporated throughout the paper.

\bibliographystyle{plain}
\bibliography{Bruhat-polytope-topology-final}
\end{document}